\documentclass{amsart}

\usepackage{amsmath}
\usepackage{amssymb}
\usepackage{amscd}
\usepackage{graphics}
\usepackage{latexsym}
\usepackage{hyperref}
\usepackage{xypic}
\usepackage{graphics}
\usepackage{hyperref}
\usepackage{tikz}

%%%%%%%%%%%%%%%%%%%%%%%%%%%%%%%%%%%%%%%%%%%%%%%%%%%%%%%%%%%%%%%%%%%%%
%%
%% Theorems
%%
%%%%%%%%%%%%%%%%%%%%%%%%%%%%%%%%%%%%%%%%%%%%%%%%%%%%%%%%%%%%%%%%%%%%%

\newtheorem{theorem}{Theorem}[section]
\newtheorem{thm}[theorem]{Theorem}
\newtheorem{cor}[theorem]{Corollary}
\newtheorem{lem}[theorem]{Lemma}
\newtheorem{prop}[theorem]{Proposition}

%%%%%%%%%%%%%%%%%%%%%%%%%%%%%%%%%%%%%%%%%%%%%%%%%%%%%%%%%%%%%%%%%%%%%
%%
%% Definitions
%%
%%%%%%%%%%%%%%%%%%%%%%%%%%%%%%%%%%%%%%%%%%%%%%%%%%%%%%%%%%%%%%%%%%%%%

\theoremstyle{definition}
\newtheorem{defn}[theorem]{Definition}

\newtheorem{ques}[theorem]{Question}

\newtheorem{rem}[theorem]{Remark}

\newtheorem{hyp}[theorem]{Hypothesis}

\newtheorem{conj}[theorem]{Conjecture}

\newtheorem{Observation}[theorem]{Observation}
%%%%%%%%%%%%%%%%%%%%%%%%%%%%%%%%%%%%%%%%%%%%%%%%%%%%%%%%%%%%%%%%%%%%%
%%
%% Remarks
%%
%%%%%%%%%%%%%%%%%%%%%%%%%%%%%%%%%%%%%%%%%%%%%%%%%%%%%%%%%%%%%%%%%%%%%

\theoremstyle{remark}

%%%%%%%%%%%%%%%%%%%%%%%%%%%%%%%%%%%%%%%%%%%%%%%%%%%%%%%%%%%%%%%%%%%%%
%%
%% Newcommands, Special Fonts
%%
%%%%%%%%%%%%%%%%%%%%%%%%%%%%%%%%%%%%%%%%%%%%%%%%%%%%%%%%%%%%%%%%%%%%%

%% \newcommand{\marpar}[1]{}

%% \newcommand{\mni}{}

\newcommand{\mbb}{\mathbb}

\newcommand{\ZZ}{\mbb{Z}}
\newcommand{\CC}{\mbb{C}}
\newcommand{\RR}{\mbb{R}}

\newcommand{\PP}{\mbb{P}}

\newcommand{\mc}{\mathcal}

\newcommand{\mcB}{\mc{B}}
\newcommand{\mcC}{\mc{C}}

\newcommand{\mcF}{\mc{F}}

\newcommand{\mcH}{\mc{H}}

\newcommand{\mcM}{\mc{M}}
\newcommand{\mcN}{\mc{N}}
\newcommand{\mcO}{\mc{O}}
\newcommand{\mcP}{\mc{P}}

\newcommand{\mcR}{\mc{R}}
\newcommand{\mcS}{\mc{S}}
\newcommand{\mcT}{\mc{T}}
\newcommand{\mcU}{\mc{U}}
\newcommand{\mcV}{\mc{V}}

\newcommand{\mcX}{\mc{X}}
\newcommand{\mcY}{\mc{Y}}

\newcommand{\HH}{\mcH(x,y)}

\newcommand{\mfm}{\mathfrak{m}}

\newcommand{\OO}{\mc{O}}

\newcommand{\wht}{\widehat}
\newcommand{\whts}{\wht{s}}

\newcommand{\whtOO}{\wht{\OO}}

\newcommand{\hk}{\hat{K}}

\newcommand{\pis}[1]{\text{Pseudo}_{#1}}

%%%%%%%%%%%%%%%%%%%%%%%%%%%%%%%%%%%%%%%%%%%%%%%%%%%%%%%%%%%%%%%%%%%%
%%
%% Newcommands, Schemes
%%
%%%%%%%%%%%%%%%%%%%%%%%%%%%%%%%%%%%%%%%%%%%%%%%%%%%%%%%%%%%%%%%%%%%%

\newcommand{\SP}{\text{Spec }}

\newcommand{\p}{\mathbb{P}^{1}}

%%%%%%%%%%%%%%%%%%%%%%%%%%%%%%%%%%%%%%%%%%%%%%%%%%%%%%%%%%%%%%%%%%%%
%%
%% Newcommands, Left, Right and Power Series
%%
%%%%%%%%%%%%%%%%%%%%%%%%%%%%%%%%%%%%%%%%%%%%%%%%%%%%%%%%%%%%%%%%%%%%

\newsavebox{\sembox}
\newlength{\semwidth}
\newlength{\boxwidth}

\newcommand{\Sem}[1]{%
\sbox{\sembox}{\ensuremath{#1}}%
\settowidth{\semwidth}{\usebox{\sembox}}%
\sbox{\sembox}{\ensuremath{\left[\usebox{\sembox}\right]}}%
\settowidth{\boxwidth}{\usebox{\sembox}}%
\addtolength{\boxwidth}{-\semwidth}%
\left[\hspace{-0.3\boxwidth}%
\usebox{\sembox}%
\hspace{-0.3\boxwidth}\right]%
}

\newsavebox{\semrbox}
\newlength{\semrwidth}
\newlength{\boxrwidth}

\newcommand{\Semr}[1]{%
\sbox{\semrbox}{\ensuremath{#1}}%
\settowidth{\semrwidth}{\usebox{\semrbox}}%
\sbox{\semrbox}{\ensuremath{\left(\usebox{\semrbox}\right)}}%
\settowidth{\boxrwidth}{\usebox{\semrbox}}%
\addtolength{\boxrwidth}{-\semrwidth}%
\left(\hspace{-0.3\boxrwidth}%
\usebox{\semrbox}%
\hspace{-0.3\boxrwidth}\right)%
}

%%%%%%%%%%%%%%%%%%%%%%%%%%%%%%%%%%%%%%%%%%%%%%%%%%%%%%%%%%%%%%%%%%%%
%%
%% Title, etc.
%%
%%%%%%%%%%%%%%%%%%%%%%%%%%%%%%%%%%%%%%%%%%%%%%%%%%%%%%%%%%%%%%%%%%%%

\title
{Weak approximation for isotrivial families}

\author[Tian, Zong]{Zhiyu Tian,~~Hong Runhong Zong}
\address{
Mathematics 253-37\\
California Institute of Technology \\
Pasadena, CA, 91125}
\email{tian@caltech.edu}

\address{
Department of Mathematics\\
Princeton University \\
Princeton, NJ, 08544-1000}
\email{rzong@math.princeton.edu}

\date{\today}

\begin{document}

%%%%%%%%%%%%%%%%%%%%%%%%%%%%%%%%%%%%%%%%%%%%%%%%%%%%%%%%%%%%%%%%%%%%
%%
%% Abstract
%%
%%%%%%%%%%%%%%%%%%%%%%%%%%%%%%%%%%%%%%%%%%%%%%%%%%%%%%%%%%%%%%%%%%%%

\begin{abstract}
 We prove weak approximation for isotrivial families of rationally connected varieties defined over the function field of a smooth projective complex curve.
\end{abstract}

%%%%%%%%%%%%%%%%%%%%%%%%%%%%%%%%%%%%%%%%%%%%%%%%%%%%%%%%%%%%%%%%%%%%%%
%%
%% Body
%%
%%%%%%%%%%%%%%%%%%%%%%%%%%%%%%%%%%%%%%%%%%%%%%%%%%%%%%%%%%%%%%%%%%%%%%

\maketitle

%%\tableofcontents

%%%%%%%%%%%%%%%%%%%%%%%%%%%%%%%%%%%%%%%%%%%%%%%%%%%%%%%%%%%%%%%
%%
%% Section: Introduction
%%
%%%%%%%%%%%%%%%%%%%%%%%%%%%%%%%%%%%%%%%%%%%%%%%%%%%%%%%%%%%%%%%

\section{Introduction}
Let $X$ be a variety over a number field or function field $K$, it has been several decades both for number theorists and geometers to investigate into the set of rational points $X(K)$; when is $X(K)$ non-empty? and how many of them if $X(K)$ is non-empty?~etc. Elementary observations suggest a good candidate class of varieties where the most rational points emerge as the \emph{rational varieties}, which belong to a broader class as \emph{rationally connected} varieties. A variety $X$ is \emph{rationally connected} if for two general points $x, y \in X$, there is a rational curve connecting them, e.g.~there is an irreducible component $V$ of $Hom(\p, X)$ and a family of maps $f: \p \times V \rightarrow X$ such that the double evaluation $f^{(2)}:\p \times \p \times V \rightarrow X\times X$ is dominant. For a characteristic free version, to be \emph{separably rationally connected} is to further require $f^{(2)}$ generic \'etale.  We recall the pioneering work by Prof.~Graber, Prof.~Harris and Prof.~Starr (\cite{GHS03})
\begin{thm}\label{ghs} Let $\pi:\mcX \rightarrow B$ be a flat surjective morphism from a projective variety to a smooth projective complex curve such that a general fiber is smooth and rationally connected. There $\pi$ has a section $s: B \to \mcX$.
\end{thm}
Denote the generic fiber of $\mcX \to B$ by $\mcX_{\eta}$ and the function field of $B$ by $K=\mathbb{C}(B)$. Theorem \ref{ghs} said that $\mcX_{\eta}(K)=\mcX_{\eta}(\mathbb{C}(B)) \neq \emptyset$. Furthermore, let $\hat{\OO}_{B,b}$ be the completion of the local ring at $b \in B$ and let $Frac\ \hat{\OO}_{B,b}$ denote its fraction field. One can consider the ring of \emph{Ad\'eles} $$\mathbb{A}(B):= \prod^{o}_{b \in B} Frac \ \hat{\OO}_{B,b}$$ of $\mathbb{C}(B)$ with weak product topology. Prof.~Hassett and Prof.~Tschinkel proposed the following conjecture in \cite{HT06}
\begin{conj}\label{HT}
Notation as in Theorem \ref{ghs}. Then $\mcX \to B$ satisfies \emph{weak approximation} at all places, namely the embedding $\mcX_{\eta}(K)=\mcX_{\eta}(\mathbb{C}(B)) \subset \mcX_{\eta}(\mathbb{A}(B))$ has dense image in $\mcX_{\eta}(\mathbb{A}(B))$. Equivalently, for every finite sequence
$(b_1,\dots,b_m)$ of distinct closed points of $B$, for every sequence
$(\whts_1,\dots,\whts_m)$ of formal power series sections of $\pi$
over $b_i$, and for every positive integer $N$, there exists a regular
section $s$ of $\pi$ which is congruent to
$\whts_i$ modulo $\mfm_{B,b_i}^{N}$ for every $i=1,\dots,m$.
\end{conj}

Some special cases of the conjecture are known, e.g.
\begin{itemize}
\item $\PP^n$, conic bundles over $\PP^1$, del Pezzo surfaces of degree at least $4$,
\item low degree complete intersections of degree $(d_1, \ldots, d_c)$ such that $\sum d_i^2 \leq n+1$ \cite{WASurvey},
\item smooth cubic hypersurfaces in $\PP^n, n\geq 6$ \cite{WAhypersurface},
\item at places of good reduction (for any family) \cite{HT06},
\item a general family of del Pezzo surfaces of degree at most $3$, \cite{BadReduction}, \cite{Knecht}, \cite{XuWA},
\item a smooth hypersurface with square-free discriminant \cite{WAhypersurface}.
\end{itemize}
And the starting point of the current work is the the following question by Prof.~Starr

\begin{ques}\label{obs}
Assume $k=\bar{k}$. Let $X$ be a smooth projective separably rationally connected $k$-variety, and $G$ be a cyclic subgroup of $Aut(X)$. Then is $X$ $G$-equivariantly rationally connected? Namely, for a pair $x, y$ as fixed points of $G$, is there a $G$-equivariant rational curve connecting them?
\end{ques}

%%Over complex numbers, Question \ref{obs} follows from Conjecture \ref{HT} so can be seen as a geometric obstruction to the latter(c.f.~Theorem \ref{gob}). We make an \emph{Observation} that the situation of Question \ref{obs} inherits a variant of the main argument of in \cite{GHS03}; based on a $G$-equivariant curve $f: C \hookrightarrow X$ connecting $x$ to $y$ we consider a moduli compactification of the rational \emph{forgetful} map$$\xymatrixcolsep{2pc}\xymatrixrowsep{0.8pc}\xymatrix{{\mcF}^G_{g,2}: \overline{\mcM}^{G}_{g,2}(X, [C])\{f(x_1)=x, f(x_2)=y \} \ar@{.>}[r]& \mcH_{g,2}(G)\\**[l] (f: C\hookrightarrow X)\in \mcU  \ar[u]  \ar[ur]}$$-the left hand side is the $G$-fixed point set, and an open neighbourhood $\mcU$ of $C$ in it parametrizes $G$-equivariantly embedded curves connecting $x$ to $y$-so it maps to ${\mcH}_{g,2}(G)$ as the Hurwicz scheme of Galois covers $C\to C/G$ with two specified ramifications. Taking a good functorial compactification of ${\mcF}^G_{g,2}$, then similar to \cite{GHS03}, tracing back the forgetful map, the preimage of a good component of some degenerated $G$-cover of $C \to C/G$ in $\overline{\mcM}_{g,2}(X,..)$ will give a $G$-equivariant rational curve connecting $x$ to $y$. We note that for the compactification, theory of \emph{twisted stable maps} by Prof.~Abramovich, Prof.~Olsson and Prof.~Vistoli is used, and an explicit pencil of twisted curves cancels the monodromy of $C \to C/G$ which is similar to the second main construction in \cite{GHS03}. This finally leads to

Over complex numbers, Question \ref{obs} follows from Conjecture \ref{HT} so can be seen as a geometric obstruction to the latter(c.f.~Theorem \ref{gob}). Now recalling the proof of Theorem \ref{ghs} in \cite{GHS03}: for a rationally connected fibration $\mcX \to B$, by the powerful smoothing of comb argument initiated by Prof.~Koll\'ar, Prof.~Miyaoka and Prof.~Mori in \cite{KMM92RC}, and a specialization argument cancelling monodromy of $C^*\to B$ around multiple fibers, one can find a \emph{``flexible''} curve $C^*$ where the forgetful map $$ {\mcF}_{g,0}: \overline{\mcM}_{g,0}(\mcX, [C^*]) \to \overline{\mcM}_{g,0}(B, \pi_{*}[C^*])$$ is smooth and surjective. Degenerate $(C^* \to B)$ in the Hurwicz scheme $\mcH_{g,B}=\overline{\mcM}_{g,0}(B, \pi_{*}[C^*])$ to contain a component isomorphic to $B$, a preimage of this component will be a section. Quite unexpectedly, we make an \emph{Observation} that the situation of Question \ref{obs} inherits a variant of the main argument of in \cite{GHS03}; based on a $G$-equivariant curve $f: C \hookrightarrow X$ connecting $x$ to $y$ we consider a moduli compactification of the rational \emph{forgetful} map$$\xymatrixcolsep{2pc}\xymatrixrowsep{0.8pc}\xymatrix{{\mcF}^G_{g,2}: \overline{\mcM}^{G}_{g,2}(X, [C])\{f(x_1)=x, f(x_2)=y \} \ar@{.>}[r]& \mcH_{g,2}(G)\\**[l] (f: C\hookrightarrow X)\in \mcU  \ar[u]  \ar[ur]}$$-the left hand side is the $G$-fixed point set, and an open neighbourhood $\mcU$ of $C$ in it parametrizes $G$-equivariantly embedded curves connecting $x$ to $y$-so it maps to ${\mcH}_{g,2}(G)$ as the Hurwicz scheme of Galois covers $C\to C/G$ with two specified ramifications. Taking a good functorial compactification of ${\mcF}^G_{g,2}$, then similar to \cite{GHS03}, tracing back the forgetful map, the preimage of a good component of some degenerated $G$-cover of $C \to C/G$ will give a $G$-equivariant rational curve connecting $x$ to $y$. We note that for the compactification, theory of \emph{twisted stable maps} by Prof.~Abramovich, Prof.~Olsson and Prof.~Vistoli is used, and an explicit pencil of twisted curves cancels the monodromy of $C \to C/G$ which is similar to the second main construction in \cite{GHS03}. This finally leads to

\begin{thm}\label{thm:Hypersurface}
Notation as in Question \ref{obs}. Assume that $G \cong \mathbb{Z}/l \mathbb{Z}$ with $l$ divisible in $k$, and that $G$ acts on $\mathbb{P}^1$ by the canonical action $z \mapsto \zeta z$, where $\zeta$ is a primitive $l$-th root of unity. Then there is a $G$-equivariant map $f: \mathbb{P}^1 \to X$ such that $f(0)=x$ and $f(\infty)=y$.
\end{thm}

Back to Conjecture \ref{HT}. Though the vanishing of this geometric obstruction does not necessarily implies Conjecture \ref{HT}, we prove that it already suffices for isotrivial families; a family $\mcX \to B$ is isotrivial if there is an \'etale (but not necessarily surjective) morphism $B' \to B$ such that there is a $B'$ isomorphism
\[
\mcX'=\mcX \times_B B' \cong X \times B',
\]
for some fixed variety $X$. Our methods uses a relative version of $G$-equivariant smoothing of combs which works for families with $G$-actions. And more generally, our argument also works for families at places satisfying the following Hypothesis \ref{hyp}, which covers a large part of all the cases currently known for Conjecture \ref{HT}.

\begin{hyp}\label{hyp}
For any $b\in B$ with $\mcX_b$, let $K$ be the fraction field $Frac \ \whtOO_{b,B}$ of $\whtOO_{b,B}$ and $\mcX_K$ be the base change to $\SP{K}$. Assume there is a Galois extension $K'/K$ with cyclic Galois group $G_b$ such that $\mcX_K\times_{\SP{K}}\SP{{K}'}$ extends to a smooth proper family $\mcX' \to \SP{\whtOO'}$, where $\whtOO'$ is the ring of integers in $K'$. Further assume the action of $G_b$ extends to $X'$ and is compatible with the projection to $\SP \whtOO'$.
\end{hyp}

\begin{thm}\label{m}
Notation as in Theorem \ref{ghs}. Then Conjecture \ref{HT} holds for the family $\mcX \to B$ at places $b \in B$ which satisfy Hypothesis \ref{hyp}. In particular, Conjecture \ref{HT} holds for isotrivial families (Proposition \ref{lem:basechange}).
\end{thm}

An example satisfying the above hypothesis \ref{hyp} is the family of cubic surfaces $F(X_1, X_2, X_3)+tX_0^3+\ldots$, whose central fiber is a cone over a smooth elliptic curve. One can make a degree $3$ base change $t=s^3$ and change of variables
\[
Y_0=sX_0, Y_1=X_1, Y_2=X_2, Y_3=X_3.
\]
The new family has a $\ZZ/3 \ZZ$ action and the central fiber $F(Y_1, Y_2, Y_3)+Y_0^3$ is smooth. As proposed by Prof.~Hassett in \cite{WASurvey} this is expected to be an exceptionally hard case of Conjecture \ref{HT}. We remark that technics and ideas here, esp.~around this example, together with some observations and ideas which deal with much subtler singularities, finally led to the solution of Conjecture \ref{HT} for cubic hypersurfaces, esp.~for cubic surfaces in \cite{T2}. This is currently the best known low dimensional result to Conjecture \ref{HT}.

\begin{rem}In spite of the classical relative equivariant smoothing in Section 4 which culminates all the smoothing technics, we present another proof of Theorem \ref{m} in the Appendix, which uses some notion from derived algebraic geometry and is more conceptual and straightforward. Following the idea of lifting by forgetful maps in the proof of Theorem \ref{ghs} and our Theorem \ref{thm:Hypersurface}, for a general rationally connected fibration $\mcX \to B$ with bad reduction at $b$,  we use the \emph{derived} forgetful map initiated by Prof.~Starr and Roth $$\xymatrixcolsep{2pc}\xymatrix{
\mcF_D:\mathcal{H}ilb_{\mcX/B} \ar[r]& {\mcP}seudo_{\mcX_b}}$$ here ${\mcP}seudo_{\mcX_b}$ is the moduli stack of pseudo-ideal sheaves, or equivalently-differential graded subscheme of $\mcX_b$ with amplitude $(1,0)$. All sections lie in $\mc{H}ilb_{\mcX/B}$, and  $\mcF_D$ is the  \emph{derived} intersection product $$\xymatrixcolsep{2pc}\xymatrix{ C \ar@{|->}[r] & C \times_{\mcX}^{L} \mcX_b}$$ introduced by Prof.~Lurie (\cite{L04}, \cite{L11}). As one of the applications of \emph{derived Algebraic Geometry}, the prominent feature of $\mcF_D$ is that it has a well-behaved tangent obstruction $\mcR \Gamma(C,\mcN_{C/\mcX}\otimes \OO_{\mcX_b})$ at $C\subset X$ when $C$ is locally complete intersection-$\mcF_D$ remembers the jet datum of $(C \subset \mcX) \to B$ and so once the first term of $\mcR \Gamma(C,\mcN_{C/\mcX}\otimes \OO_{\mcX_b})$ is cancelled at some ``flexible'' curve, then a well-behaved degeneration and the forgetful-map type argument applies to get the approximating section as desired. To explore for wider situations than in Hypothesis \ref{hyp} where this machinery technically applies is believed by the authors to be a possible way towards a full solution to Conjecture \ref{HT}.
\end{rem}

%%%%%%%%%
%%
%%Acknowledgment
%%
%%%%%%%%%%
\textbf{Acknowledgment:} The authors would first like to thank Professor Jason Starr for introducing them to the question and inumerous helpful discussions; Professor J\'anos Koll\'ar for his constant support of the second named author and encouraging comments on the proof of Theorem \ref{thm:Hypersurface}; Professor Chenyang Xu for helpful discussions, sharing us with his ideas and generous support in Beijing International Center of Mathematics Research where the authors partially carried out this project; Professor Tommaso de Fernex for helpful comments on Theorem \ref{m}; Professor Martin Olsson, Professor Vivek Shende, and Professor Xinyi Yuan for enlightening comments and suggestions on the first version of this paper; Doctor Qile Chen and Doctor Yi Zhu for reading part of the first draft and helpful comments.

%%%%%%%%%%%%%%%%%%%%%%%%%%%%%%%%%%%%%%%%%%%%%%%%%%%%%%%%%%%%%%%
%%
%% Section: Preliminary
%%
%%%%%%%%%%%%%%%%%%%%%%%%%%%%%%%%%%%%%%%%%%%%%%%%%%%%%%%%%%%%%%%

\section{Preliminary results}\label{sec:pre-results}
In this section we collect some preliminary results for later reference. The results proved here are slightly stronger than what are actually needed. In this section we will assume that $G$ is a finite group except in Lemma \ref{lem:EquivSmoothing}, and $k$ is an algebraically closed field whose characteristic is not divisible by the order of $G$.

\subsection{Everything with a group action}
%%%%%%%%%%%%%%%%%%%%%%%%%%%%%%%%
%%
%%Equivariant lifting

Firstly, we are concerned with the following infinitesimal lifting problem. Let $S$ and $R$ be $k$-algebras with a $G$-action and $f: S \rightarrow R$ be a $k$-algebra homomorphism compatible with the action. Also assume that $R$ is a finite type $S$-algebra via the map $f$. Let $A$ be an Artinian $k$-algebra with a $G$-action, $I \subset A$ an invariant ideal such that $I^2=0$. Consider the following commutative diagram, where $p$ is a $G$-equivariant $k$-algebra homomorphism
$$
\xymatrixcolsep{3pc}\xymatrix{
S\ar[r]^f \ar[d] & R \ar[d]^p\\
A\ar[r]^{\pi} & A/I \ar[r] &0
}
$$
We want to know when one can find a $G$-equivariant lifting $h:R\rightarrow A$. The following lemma completely answers this question.

\begin{lem}\label{lem-equiv-lifting}
If we can lift the map $p$ to a $k$-algebra homomorphism $h: R\rightarrow A$ such that $\pi\circ h=p$, then we can find an equivariant lifting $\tilde{h}:R \rightarrow A$ with the same property.
\end{lem}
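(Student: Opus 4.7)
The plan is to use a standard group-cohomological averaging argument, which succeeds because the order of $G$ is invertible in $k$. The starting observation is that the set $L$ of all ring homomorphisms $R\to A$ which reduce to $p$ modulo $I$ and extend the given map $S\to A$ is a torsor under $\operatorname{Der}_S(R, I)$: if $h_1, h_2\in L$, then $h_2 - h_1$ takes values in $I$, vanishes on the image of $S$, and, since $I^2 = 0$, is a $k$-linear derivation $R\to I$ where $I$ is regarded as an $R$-module via $p$. Conversely, adding any such derivation to an element of $L$ produces another element of $L$.

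Using the hypothesized (possibly non-equivariant) lift $h\in L$, I twist it by each group element: for $\sigma\in G$ set $h_\sigma(r) := \sigma\bigl(h(\sigma^{-1}r)\bigr)$. The equivariance of $p$, of the given map $S\to A$, and of $f$ together imply that each $h_\sigma$ again lies in $L$. This assignment is the natural $G$-action on the torsor $L$, and an equivariant lift is exactly a fixed point of this action. Setting $d_\sigma := h_\sigma - h \in \operatorname{Der}_S(R, I)$ and letting $G$ act on derivations by $(\sigma\cdot d)(r) := \sigma\bigl(d(\sigma^{-1}r)\bigr)$, a direct computation using $h_{\sigma\tau} = \sigma\cdot h_\tau$ yields the $1$-cocycle identity
\[
d_{\sigma\tau} = d_\sigma + \sigma\cdot d_\tau.
\]

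Finally I average. The element
\[
d' := \frac{1}{|G|}\sum_{\sigma\in G} d_\sigma \in \operatorname{Der}_S(R, I)
\]
is well defined because $|G|$ is a unit in $k$. Substituting the cocycle identity and reindexing the sum gives $\sigma\cdot d' - d' = -d_\sigma$ for every $\sigma$. I then take $\tilde h := h + d'$; this remains a ring homomorphism because $d'$ is a derivation into $I$ with $I^2 = 0$, and it still belongs to $L$. Compatibility of the $G$-actions on $L$ and on $\operatorname{Der}_S(R, I)$ then gives $\sigma\cdot\tilde h = h_\sigma + \sigma\cdot d' = (h + d_\sigma) + (d' - d_\sigma) = \tilde h$, so $\tilde h$ is the sought equivariant lift. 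The only conceptual ingredient is the vanishing of the obstruction $H^1\bigl(G, \operatorname{Der}_S(R, I)\bigr)$, which is automatic once $|G|\in k^\times$; I do not anticipate a serious obstacle beyond correctly setting up the cocycle identity and verifying that averaging lands in the right affine slice.
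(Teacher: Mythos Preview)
Your proof is correct and follows essentially the same approach as the paper: both set up the $1$-cocycle $\sigma\mapsto h_\sigma-h$ with values in $\operatorname{Der}_S(R,I)\cong\operatorname{Hom}(\Omega_{R/S},I)$ and use that $|G|$ is invertible to kill the obstruction in $H^1$. The only difference is cosmetic: the paper cites the vanishing of $H^1(G,-)$ from a reference, whereas you explicitly carry out the averaging that proves it.
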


\begin{proof}
For every element $g$ in $G$, define a map $h_g:R\rightarrow A$ by $h_g(r)=g\cdot h(g^{-1}\cdot r)$. This is an $S$-algebra homomorphism and also a lifting of the map $p: R\rightarrow A/I$. The map $h$ is $G$-equivariant if and only if $h_g(r)=h(r)$ for every $g\in G$ and every $r \in R$. The difference of any two such liftings is an element in $Hom(\Omega_{R/S}, I)$, where $\Omega_{R/S}$ is the module of relative differentials. Therefore one has $\theta(g)(r)=h_g(r)-h(r)$ in $Hom(\Omega_{R/S}, I)$. Notice that $Hom(\Omega_{R/S}, I)$ is naturally a $G$-module with the action of $G$ on $Hom(\Omega_{R/S}, I)$ given by
\[
G \times Hom(\Omega_{R/S}, I) \rightarrow Hom(\Omega_{R/S}, I)
\]
\[
(g, \eta) \mapsto g\cdot\eta=(\omega \mapsto g\cdot\eta(g^{-1}\cdot \omega)).
\]

It is easy to check that
\[
\theta(gh)=g\cdot\theta(h)+\theta(g).
\]
Thus $\theta$ defines an element $[\theta]$ in $H^1(G,Hom(\Omega_{R/S}, I))$. The existence of an equivariant lifting is equivalent to the existence of an element $\Theta \in \textsl{Hom}(\Omega_{R/S}, I)$ such that $g \Theta-\Theta=\theta$, i.e, the class defined by $\theta$ is zero in $H^1(G,Hom(\Omega_{R/S}, I))$. But the characteristic of the field is relatively prime to the order of $G$, so the higher cohomologies of $G$ vanish (\cite{weibel94}, Proposition 6.1.10, Corollary 6.5.9).
\end{proof}

\begin{cor}\label{sm-equiv-lifting}
Let $X$ and $Y$ be two $k$-schemes with a $G$-action and $f: X \rightarrow Y$ be a finite type morphism compatible with the action. Let $x \in X$ be a fixed point, and $y=f(x)$ (hence also a fixed point). Assume that $f$ is smooth at $x$. Then there exists a $G$-equivariant section $s: \SP\whtOO_{y,Y}\rightarrow X$.
\end{cor}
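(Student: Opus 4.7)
The plan is to construct the section as the inverse limit of compatible $G$-equivariant sections modulo successive powers of the maximal ideal $\mfm$ of $\whtOO_{y,Y}$, building them inductively from Lemma~\ref{lem-equiv-lifting}. A $G$-equivariant section $s:\SP \whtOO_{y,Y}\to X$ corresponds to a $G$-equivariant local $k$-algebra homomorphism $\mcO_{x,X}\to\whtOO_{y,Y}$ splitting $f^{\#}$, and since the target is $\mfm$-adically complete it suffices to produce compatible equivariant splittings $h_n:\mcO_{x,X}\to A_n := \whtOO_{y,Y}/\mfm^{n+1}$ for every $n\ge 0$.

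For the base case $n=0$, the map $\mcO_{x,X}\to k(x)=k=k(y)$ provided by the residue field at $x$ is visibly $G$-equivariant (since $x$ is a fixed point) and splits the corresponding map on $\mcO_{y,Y}$. For the inductive step, suppose we have a $G$-equivariant splitting $h_n:\mcO_{x,X}\to A_n$ over the canonical map $\mcO_{y,Y}\to A_n$. Set $A=A_{n+1}$ and $I=\mfm^{n+1}/\mfm^{n+2}\subset A$, an invariant square-zero ideal, with $A/I=A_n$. Because $f$ is smooth at $x$ (hence formally smooth), the infinitesimal lifting criterion yields a (not necessarily equivariant) $k$-algebra lifting $h:\mcO_{x,X}\to A$ of $h_n$ over $\mcO_{y,Y}\to A$. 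Applying Lemma~\ref{lem-equiv-lifting} to the data $S=\mcO_{y,Y}$, $R=\mcO_{x,X}$, $p=h_n$, $A=A_{n+1}$, and the lifting $h$, we obtain a $G$-equivariant lifting $h_{n+1}:\mcO_{x,X}\to A_{n+1}$ with $\pi\circ h_{n+1}=h_n$. This completes the induction.

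Finally, the maps $\{h_n\}$ are compatible by construction, so they assemble into a $G$-equivariant $k$-algebra homomorphism $\mcO_{x,X}\to\varprojlim_n A_n=\whtOO_{y,Y}$ that splits $f^{\#}$. This is precisely the data of a $G$-equivariant section $s:\SP\whtOO_{y,Y}\to X$.

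The only genuinely new input is Lemma~\ref{lem-equiv-lifting}; the rest is the standard inverse-limit packaging of the infinitesimal lifting property for smooth morphisms. The potential pitfall is verifying at each stage that the hypotheses of Lemma~\ref{lem-equiv-lifting} are met — in particular, that $A_{n+1}$ is Artinian with an invariant square-zero ideal $I=\mfm^{n+1}/\mfm^{n+2}$, and that the smoothness of $f$ at $x$ really does produce a (non-equivariant) lift at each finite level — but both are routine.
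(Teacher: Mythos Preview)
Your proof is correct and follows essentially the same approach as the paper: start with the $G$-equivariant section to the closed point, use smoothness (the infinitesimal lifting criterion) to lift one level at a time, and invoke Lemma~\ref{lem-equiv-lifting} at each step to upgrade the lift to an equivariant one, then pass to the inverse limit. Your write-up is more detailed than the paper's, but the content is the same.
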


\begin{proof}
Let $S$ be the local ring at y, and $R$ be the local ring at x. There is an obvious $G$-action on both of these $k$-algebras. We start with the section
\begin{align*}
s_0: \SP k(y) &\rightarrow f^{-1}(y), \\
\SP k(y)&\mapsto x,
\end{align*}
 which is clearly $G$-equivariant. By the smoothness assumption, a section from $\SP(\whtOO_{y,Y}/{\mfm_{y}^n})$ always lifts to a section from $\SP(\whtOO_{y,Y}/{{\mfm_y}^{n+1}})$. Now apply Lemma~\ref{lem-equiv-lifting} inductively to finish the proof.
\end{proof}

\subsection{Comb and equivariant comb construction}

Let $X$ be an algebraic variety over an algebraically closed field $k$, a \emph{very free curve(map)} is an $f: \mathbb{P}^1 \to X$ with $f^{*}\mcT_X$ ample. Let $X_{sm}$ be the smooth locus of $X$, and \emph{very free locus} $X_{vf} \subset X_{sm}$ be the open locus of points $x \in X$ which lies in the image of a \emph{very free curve}. Then it is well known that $X_{sm}$ is separably rationally connected if and only if $X_{vf} \neq \emptyset$ (\cite{KMM92RC}, \cite{Kollar96}).
%%%%%%%%%%%%%%%%%%%%%%%%%%%%%%%%
%%
%%Equivariant smoothingLet $X$ be a $k$-variety, a \emph{very free curve(map)} is an $f: \mathbb{P}^1 \to X$ with $f^{*}\mcT_X$ ample. Let $X_{sm}$ be the smooth locus of $X$, and \emph{very free locus} $X_{vf} \subset X_{sm}$ be the open locus of points $x \in X$ which lies in the image of a \emph{very free curve}. Then it is well known that $X_{sm}$ is separably rationally connected if and only if $X_{vf} \neq \emptyset$ (\cite{KMM92RC}, \cite{Kollar96}).
\begin{defn}
\label{defcomb}
Let $k$ be an arbitrary field. A {\it comb} with {\it $n$ teeth} over $k$ is a curve
$C$ with $n+1$ components $D,C_1,\dots,C_n$ over $\bar k$
satisfying:
\begin{enumerate}
\item $D$ and $C_1\cup\dots\cup C_n$ are defined over $k$
\item $C_i\cong \mathbb{P}^1$, $C_i\cap C_j=\emptyset$ for $i\neq j$, and each $C_i$ intersects $D$
transversely in a single smooth point over $\bar{k}$.
\end{enumerate}
$D$ is the {\it handle} of the comb, and $C_1,\dots,C_n$ the {\it teeth}.
\end{defn}

\begin{defn}[Smoothing]Given $f: C\to X$ from a {\it comb} $C$ with {\it handle} $D$, a smoothing of $f$ is a family $\Sigma \to T$ over a pointed curve $(T, 0)$ with $F: \Sigma \to X$ such that $\Sigma_0 \cong C, F\vert_{\Sigma_0}=f$ and $\Sigma_t$ is a smooth curve for general $t$.
\end{defn}
We have a general procedure for \emph{smoothing of comb} using \emph{very free curves}, the most general form is presented in \cite{TZ}.
\begin{thm}[Proposition 2.4 of \cite{TZ}]\label{smoothing}
Given $f_0: C_0 \to X$ with $C_0$ a curve and $X$ a smooth quasi-projective $k$-variety, and an integer $d$. Suppose $X$ is separably rationally connected and  $f_0(C) \cap X_{vf}\neq \emptyset$, then there are $q\gg0$ {\it very free curves} $f_i: C_i \to X, 1 \leq i \leq q$, such that:
\begin{enumerate}
 \item $C=C_0\cup C_1\cup...\cup C_q$ is a {\it comb} with $q$ {\it teeth}. And there is a $k$-morphism $f: C \to X$ with a smoothing $\Sigma \to T, G: \Sigma \to X$.
\item $H^1(\Sigma_t, G_t^*{\mcT_{X}}\otimes M)=0$ for a general member $G_t: \Sigma_t \to X$ and any line bundle $M$ with $|deg\ M|\leq d$.
\end{enumerate}
\end{thm}

For proper family $\mcX \to B$ where $\mcX$ can be a \emph{Deligne-Mumford} stack, we need a relative version of \emph{smoothing of comb} which can be proved using the same method for Theorem \ref{smoothing}. And it was implicitly stated in \cite{GHS03}, \cite{dj-st} and \cite{HT06}.

\begin{thm}[Relative smoothing]\label{relative smoothing}
Let $\pi: \mathfrak{X}\to B$ be a family of smooth quasi-projective $k$-varieties over a curve $B$ with separably rationally connected general fibers. Given: a compactification $\bar{\pi}: \overline{\mathfrak{X}} \to B$ where $\overline{\mathfrak{X}}$ can be an algebraic space, Deligne-Mumford stack or Artin stack; a multisection $(C' \subset \overline{\mathfrak{X}}) \to  B$ which lies in the smooth locus of $\bar{\pi}$ and intersects with $\bigcup_{b\in B} {\overline{\mathfrak{X}}_{b}}_{vf}$; an integer $d$. Then there are $q\gg0$ morphism $f_i: C_i\cong \mathbb{P}^1 \to \overline{\mathfrak{X}}, 1 \leq i \leq q$, such that:
\begin{enumerate}
\item For $ 1 \leq i \leq q$,  $g_i:C_i\rightarrow \overline{\mathfrak{X}}$ factors through a separably rationally connected fiber  $\overline{\mathfrak{X}}_{c_i}$ for some $c_i\in C$,  and $g_i:C_i\rightarrow \overline{\mathfrak{X}}_{c_i}$ is very free in $\overline{\mathfrak{X}}_{c_i}$, namely, $C_i$ lies in the smooth locus of $\bar{\pi}$ with $f_i^{*}\mcT_{\overline{\mathfrak{X}}/B}$ ample.
 \item $\hat{C}=C'\cup C_0\cup C_1\cup...\cup C_q$ is a {\it comb} with $q$ {\it teeth} with a natural $k$-morphism $f: C_t \to \overline {\mathfrak{X}}$ and a smoothing $\Sigma \to T, G: \Sigma \to \overline{\mathfrak{X}}$ which lies in the smooth locus of $\bar{\pi}$.
\item $H^1(\Sigma_t, G_t^*{\mcT_{\overline{\mathfrak{X}}/B}}\otimes M)=0$ for a general member $G_t: \Sigma_t \to \overline{\mathfrak{X}}$ and any line bundle $M$ with $|deg\ M|\leq d$, where $G_t: \Sigma_t \to \overline{\mathfrak{X}}$ denotes the restriction of $G$ to the fiber $\Sigma_t$.
\end{enumerate}
\end{thm}
\begin{proof} The proof is basically the same as \cite[Proposition 2.4 ]{TZ}, so we only sketch the proof.
By \cite[Lemma 25]{HT06} and \cite[Proposition 24]{HT06}, one can attach  sufficiently many very free curves $C_i,i=1,\ldots,q$ on the fibers of $\pi$ to $C'$ such that there is a smoothing of the comb $\hat{C}=C'\cup C_1\cup...\cup C_q$ whose handle is $C'$. Then there is a natural morphism $g:\hat{C}\rightarrow \overline{\mathfrak{X}}$.  Moreover, one can choose $q$ large enough such that $q-h^1(C', (g^\ast \mcT_{\overline{\mathfrak{X}}/B})|_{C'})\gg 0 $. Then $H^1(\Sigma_t, G_t^*{\mcT_{\overline{\mathfrak{X}}/B}}\otimes M)=0$ for  any line bundle $M$ on $\Sigma_t$ with $|deg\  M|$ bounded $d$ by \cite[Lemma 2.5]{TZ} and \cite[Lemma 2.6]{TZ}.
\end{proof}

We also need a $G$-equivariant version of Theorem \ref{smoothing}.
\begin{thm}[G-equivariant smoothing]\label{equivariant smoothing}
Assume $k=\bar{k}$. Let $C_0$ be a curve and $X$ a smooth quasi-projective $k$-variety both with $G$ actions, with a $G$-equivariant embedding $f_0: C_0 \hookrightarrow X$ and an integer $d$. Suppose $X$ is separably rationally connected and  $f_0(C) \cap X_{vf}\neq \emptyset$, then there are $q\gg0$ {\it very free curves} $f_i: C_i \to X, 1 \leq i \leq q$, such that:
\begin{enumerate}
 \item $C=C_0\cup C_1\cup...\cup C_q$ is a {\it comb} with $q$ {\it teeth} invariant by $G$.  And there is a $G$-equivariant morphism $f: C \to X$ with $G$-equivariant smoothing $\Sigma \to T, G: \Sigma \to X$, $T$ with trivial $G$ action.
\item $H^1(\Sigma_t, G_t^*{\mcT_{X}}\otimes M)^G=0$ for a general member $G_t: \Sigma_t \to X$ and any line bundle $M$ with a $G$ linearization and $|deg\ M|\leq d$.
\end{enumerate}
\end{thm}
\begin{proof}
One can assume $\dim X\geq 3$ by replacing $X$ with $X\times \PP^M, M\gg 0$. Let $C_0$ be the image of $f_0$. Apply Theorem \ref{smoothing}, pick $p\gg 0$ very free curves $C_i$ intersecting with $C$ transversely at general point $p_i$ with general tangent direction. Form the {\it comb} $C=C_0 \cup_{g\in G} \cup_{i=1,...,p} g\ C_i$ the latter is $G$-invariant and have $H^1(C, {\mcT}_X|_{C} \otimes M)=0$ for any line bundle $M$ with $|deg\ M|\leq d$. The vanishing implies that Since $|G|$ is divisible in $k$ we have $H^1(C, {\mcT}_X|_{C} \otimes M)^G=0$ since higher Galois cohomologies of $G$ vanish (c.f. beginning of this section and the proof of Lemma \ref{lem-equiv-lifting}). So it remains to prove the existence of a $G$-equivariant smoothing of $C$, which is equivalent to find a $G$-equivalent section of $H^0(C,\mcN_{C/X})$ which has general direction at the nodes of $C$ and hence can smooth them.

Let $D_1=C_0 \cup C_1 ...\cup C_p$, by Theorem \ref{smoothing} again, $\mcN_{D_1/X}$ is globally generated if $p\gg 0$ and $H^1(D_1, \mcN_{D_1/X} \otimes L_1)=0$, where $L_1$ is a line bundle on $D_1$ which has degree $-l$ on $C$ and $0$ on all the other irreducible components $C_i$'s. After attaching all $G$-conjugates of $C_i$'s, the new nodal curve is simply $C$. We have $H^1(C, \mcN_{C/X}\otimes L)=0$, where $L$ is an extension of the line bundle $L_1$ on $C_0$ which has degree $-l$ on $C_0$ and $0$ on all the other irreducible components.

We have the following two exact sequences
\[
0 \to \oplus \mcN_{C/X}|_{C_j}(-p_j) \to \mcN_{C/X} \to \mcN_{C/X}|_{C_0} \to 0,
\]
\[
0 \to \mcN_{C_0/X}\to \mcN_{C_0/X} \to \oplus_j Q_j \to 0,
\]
where $Q_j$ is a torsion sheave supported on the point $p_j \in C_0$. Every sheaf has a natural $G$ action and the $G$-equivariant deformations are given by $G$-invariant sections of $\mcN_{C/X}$. One just need to find a $G$-invariant section in $H^0(C, \mcN_{C/X})$ which is not mapped to $0$ under the composition of maps $$H^0(C, \mcN_{C/X}) \to H^0(C_0, \mcN_{C/X}|_{C_0}) \to Q_j$$ for all $j$.

Since $H^1(C, \mcN_{C/X}\otimes L)=0$, we also have $H^1(C_0, \mcN_{C/X}\otimes L\otimes \OO_{C_0})=0.$ Let $c_1, \ldots, c_l$ be an orbit of the $G$ action on $C_0$. Then there is a section of $\mcN_{C/X}|_{C_0}$ which vanishes on $c_1, \ldots, c_{l-1}$ but not on $c_l$. Taking average over $G$ gives a $G$-invariant section of $\mcN_{C/X}|_{C_0}$ which does not vanish on any of $c_1, \ldots, c_l$. In particular, for any $l$ nodes on $C_0$ which lie in a $G$-orbit, we can find a $G$-invariant section of $\mcN_{C/X}$ which does not vanish on them. Then a general $G$-invariant section of $\mcN_{C/X}|_{C_0}$ does not vanish on any of the nodes $p_i$'s. We have the surjection $$H^0(C, \mcN_{C/X}) \to H^0(C_0, \mcN_{C/X}|_{C_0}) \to 0$$ and hence the surjection $$H^0(C, \mcN_{C/X})^G \to H^0(C_0, \mcN_{C/X}|_{C_0})^G \to 0$$ by the vanishing of higher \emph{Galois} cohomologies of $G$ again. So a general $G$-invariant section in $H^0(C, \mcN_{C/X})^G$ does not vanish on the nodes. We take the $G$-equivariant deformation given by this section, which necessarily smooths all the nodes of $C$.
\end{proof}
\begin{rem}\label{coherent sheaf}
In Theorem \ref{smoothing}, Theorem \ref{relative smoothing} and Theorem \ref{equivariant smoothing}, by taking resolution of syzygies one can replace $M$ by a specific coherent sheaf supported on the \emph{comb}, and it is left to reader to find a universal degree bound.
\end{rem}

\subsection{Iterated blow-up}

Let $\pi: \mcX \to C$ be a flat proper family over a smooth projective connected curve $C$. Let $c \in C$ be a closed point and $\whts_0: \SP \whtOO_{c, C} \to \mcX$ be a formal section. Assume that $\whts_0$ lies in the smooth locus of $\mcX \to C$. The $N$-th iterated blow-up associated to $\whts_0$ is defined inductively as follows.

The $0$-th iterated blow-up $\mcX_0$ is $\mcX$ itself. Assume the $i$-th iterated blow-up $\mcX_i$ has been defined. And let $\whts_i$ be the strict transform of $\whts_0$ in $\mcX_i$. Then $\mcX_{i+1}$ is defined as the blow-up of $\mcX_i$ at the point $\whts_i(c)$.

We remark that if both $\mcX$ and $C$ has a $G$ action such that
\begin{itemize}
\item
the map $\pi: \mcX \to C$ is $G$-equivariant.
\item
the point $c$ is the fixed point of $G$. And $\whts_0$ is $G$-equivariant.
\end{itemize}
Then each $\mcX_i$ has a $G$ action such that both the natural morphism $\mcX_{i+1} \to \mcX_i$ and the formal section $\whts_i$ are $G$-equivariant. In particular, the intersection of $\whts_i$ with the central fiber is a fixed point of $G$.

One can also do this at fibers over a $G$-orbit in $C$, provided the formal sections over these points are mapped to each other via the $G$ action. Then the iterated blow-up still has a $G$ action and every morphism is compatible with the action.

On $\mcX_N$, the fiber over the point $c$ are the strict transform of $\mcX_c$ and exceptional divisors $E_1, \ldots, E_N$ and
\begin{itemize}
\item
$E_i, i = 1, \ldots, N-1$, is the blowup of $\PP^d$ at $r_i (=\whts_i(c))$, the point where
the proper transform of $\whts_0$ (i.e. $\whts_i$) meets the fiber over $c$ of the $(i-1)$-th iterated blow-up;
\item
$E_N \cong \PP^d$, where $d$ is the dimension of the fiber.
\end{itemize}

The intersection $E_i \cap E_{i+1}$ is the exceptional divisor $\PP^{d-1}\subset E_{i-1}$, and a proper transform of a hyperplane in $E_{i+1}$, for $i=0, \ldots, N-1$.

Furthermore, to find a section that agrees with $\whts_0$ up to the N-th order is the same as finding  a section in $\mcX_{N+1}$ intersecting the fiber over $c$ at $E_{N+1}$, or equivalently, a section in $\mcX_N$ which intersects the exceptional divisor $E_N$ at the point $r_N=\whts_N(c)$ (Proposition 11, \cite{HT06}).

\subsection{Isotrivial family and geometric obstruction}
\begin{defn}\label{iso}
Let $\pi: \mcX \to B$ be a flat proper family of $k$-varieties over arbitrary base $B$. It is isotrivial if there is an \'etale morphism $B' \to B$ such that there is a $B'$ isomorphism $\mcX'=\mcX \times_B B' \cong X \times B'$ for some $k$-variety $X$.
\end{defn}
We have to show that an isotrivial family over complex numbers satisfies Hypothesis \ref{hyp}, as follows:

\begin{prop}\label{lem:basechange}
Let $\mathbb{C}\Semr{t}$ be the Laurent fied over complex numbers. And let $\pi: \mcX \to \SP \CC\Semr{t}$ be an projective family of complex varieties. If $\pi$ is isotrivial,
then there is a finite cyclic group $G$ of order $l$, a smooth projective variety $X$, and a group homomorphism from $G$ to the automorphism group of $X$, such that the family $\mcX \to \SP \CC\Semr{t}$ is isomophic to the quotient $(X \times \SP \CC\Semr{t'})/G$, where the action of $G$ on $\SP \CC\Semr{t'}$ is given by the choice of a primitive $l$-th root of unity $\zeta$ and $t \mapsto \zeta \cdot t'$. In particular, Hypothesis \ref{hyp} is satisfied for any isotrivial family over complex numbers.
\end{prop}

\begin{proof}
By definition, after an \'etale base change, the family becomes trivial. Since the only connected \'etale cover of $\SP \CC\Semr{t}$ is of the form $$\SP \CC\Semr{t'} \to \SP \CC\Semr{t}, t=t'^l,$$ we have a trivial family $X \times \SP \CC\Semr{t'}$ together with an action of a cyclic group $G$ of order $l$.

Since the family $\mcX \to \SP \CC\Semr{t}$ is projective, there is a relative very ample line bundle $\mathcal{L}$ on $\mcX$. Thus there is a $G$-invariant line bundle $L$ on $X \times \SP \CC\Semr{t'}$. The line bundle $L$ is the pull-back of a line bundle $L_0$ on $X$ via the first projection. Choose a $G$-linearization on $L$. Then the group $G$ acts on the space of sections $H^0(X \times \SP \CC\Semr{t'}, L)=H^0(X, L_0) \otimes \CC\Semr{t'}$. This action naturally extends to an action of $H^0(X, L_0) \otimes \CC\Sem{t'}$. Thus there is a extension of the $G$ action to $X \times \SP \CC\Sem{t'}$. And there is a natural action of $G$ on $X$ (by restricting the action to the closed central fiber) so that the family $\mcX \to \SP \CC\Semr{t}$ is isomorphic to the quotient $(X \times \SP \CC\Semr{t'})/G$.
\end{proof}

And now it comes to the starting point of this paper:
\begin{thm}\label{gob}
Over complex numbers, Question \ref{obs} follows from Conjecture \ref{HT} so geometrically obstructs the latter.
\end{thm}
\begin{proof}
Let $X$ be a smooth projective complex rationally connected variety with a $G$ action, where $G\cong \mathbb{Z}/l\mathbb{Z}$, and $C\cong \PP^1$. Let $G$ act on $C \cong \mathbb{P}^1$ by $z \mapsto \zeta \cdot z$ where $\zeta$ is a primitive $l$-th root of unity. Take the diagonal action of $G$ on $X \times C$ and form the quotient $q:X\times C\rightarrow \mcX=X \times C /G$ with $B=C/G \cong \p$. Then we have projection $\pi: \mcX \rightarrow C/G$ and diagram$$
\xymatrixcolsep{3pc}\xymatrix{
X\times C \ar[r] \ar[d] &\mcX \ar[d]\\
C \ar[r]& B}
$$

If Conjecture \ref{HT} holds for the family $\pi: \mcX \rightarrow B$ with rationally connected general fibers, one can choose two fixed points $x, y$ in $X$ and find a section $s:\p \rightarrow \mcX$ which satisfies $s(0)=q(x,0)$ and $s(\infty)=q(y,\infty)$. Now $s$ gives a $G$-equivariant section $\tilde{s}$ of $\pi_2: X\times C \rightarrow C$ such that $\tilde{s}(0)=(x,0), \tilde{s}(\infty)=(y,\infty)$. Then the projection onto $X$ gives a $G$-equivariant rational curve connecting $x$ to $y$. This leads to Question \ref{obs}.
\end{proof}

\subsection{Twisted curves and Twisted Stable Maps }
In this subsection, we give a short introduction to the theory of twisted curves and $n$-pointed twisted stable maps. We refer to \cite{twisted} and \cite{ol} for more details.

\begin{defn}A \emph{twisted nodal $n$-pointed curve} over $S$ is a diagram
$$\xymatrixcolsep{2.5pc}\xymatrix{
\Sigma_i^{\mcC} \ar@{}[r]|-*[@]{\subset} \ar[rd]  & \mcC \ar[d] \\
 & C \ar[d] \\ &S}$$

where:
\begin{itemize}
 \item $\mcC$ is a tame Deligne-Mumford stack, proper and of finite presentation over
$S$, and \'etale locally is a nodal curve over $S$;
\item $\Sigma^{\mcC}_i \subset \mcC$ are disjoint closed substacks in the smooth locus of $\mcC \to S$;
\item $\Sigma^{\mcC}_i \to S$ are \'etale gerbes;
\item The morphism $\mcC \to C$ exhibits $C$ as the coarse moduli scheme of $\mcC$;
\item $\mcC \to C$ is an isomorphism over $C_{gen}$.
\end{itemize}
\end{defn}

And we can define twisted stable maps (for full categorical definition see \cite{twisted}). We consider a
proper tame Deligne-Mumford stack $\mcM$ admitting a projective coarse
moduli scheme $\mathbf{M}$. We fix an ample invertible sheaf on $\mathbf{M}$.

\begin{defn}\label{Def:twisted-stable-map} A  {\em Twisted Stable
$n$-pointed map of genus $g$ and degree $d$ over $S$}
$$(\mcC \to S, \Sigma_i^{\mcC}\subset \mcC, f\colon  \mcC \to \mcM)$$   consists
of a commutative diagram
$$\xymatrixcolsep{3pc}\xymatrix{
 \mcC \ar[r] \ar[d] & \mcM \ar[d] \\
	 C \ar[r] \ar[d]  & \mathbf{M} \\
S}$$
 along with $n$ closed substacks $\Sigma_i^{\mcC}\subset \mcC$,
satisfying:
\begin{enumerate}
\item $\mcC \to C \to S$ along with $\Sigma_i^{\mcC}$  is a twisted nodal
$n$-pointed curve  over $S$;
\item the morphism $\mcC \to \mcM$ is representable; and
\item $(C\to S, \Sigma_i^C, f\colon C \to \mathbf{M})$ is a stable $n$-pointed map
of degree $d$.
\end{enumerate}
\end{defn}

We recall the main theorem of \cite{twisted}.

\begin{thm}Let $\overline{\mathcal{M}}_{g,n}(\mathcal{M},d)$ be fibered over $\mcS ch/S$, the category of the twisted stable $n$-pointed maps $\mathcal{C} \to \mathcal{M}$ of genus $g$ and degree $d$.
 \begin{enumerate}
 \item The category $\overline{\mathcal{M}}_{g,n}(\mathcal{M},d)$ is a proper algebraic stack.
\item The coarse moduli space $\overline{\mathbf{M}}_{g,n}(\mathcal{M},d)$ of $\overline{\mathcal{M}}_{g,n}(\mathcal{M},d)$ is projective.
 \item There is a commutative diagram
$$\xymatrix{
\overline{\mathcal{M}}_{g,n}(\mathcal{M},d) \ar@{->}[r]\ar[d] & \overline{\mathcal{M}}_{g,n}(\mathbf{M},d) \ar[d] \\
\overline{\mathbf{M}}_{g,n}(\mathcal{M},d)         \ar@{->}[r]     & \overline{\mathbf{M}}_{g,n}(\mathbf{M},d),
 }$$
where the top arrow is proper, quasi-finite and relatively of Deligne-Mumford type, and the bottom
arrow is finite.
\end{enumerate}
\end{thm}
 \begin{rem}\label{stabilize}
 For a morphism between tame Deligne-Mumford stacks $f: \mcX \to \mcY$, we have a natural morphism
$$\xymatrix{
\overline{\mathcal{M}}_{g,n}(\mathcal{X},d) \ar@{->}[r] & \overline{\mathcal{M}}_{g,n}(\mathcal{Y},f_* d)
 }.$$
This map will contract the components which become non-stable after composed with the map $f$-from the definition one can show that all such components are isomorphic to $[\mathbb{P}^1/G]$ with $2$ stacky points $[0]$ and $[\infty]$, where $G$ is some cyclic group with $|G|$ divisible in $k$ and the $G$-action is canonically defined by multiplying a primitive $|G|$-th root of unity.
\end{rem}

\section{Finding $G$-equivariant rational curves}
This section is devoted to prove
\begin{thm}[Theorem \ref{thm:Hypersurface}]
 Let $X$ be a smooth projective separably rationally connected variety over $k$ with $\bar{k}=k$. Assume $G \cong \mathbb{Z}/l \mathbb{Z}$ with $l$ divisible in $k$, and G acts on $X$ and on $\mathbb{P}^1$ by $z \mapsto \zeta z$, where $\zeta$ is a primitive $l$-th root of unity. Then for $x,y$ as $2$-fixed points of the $G$ action on $X$, there is a $G$-equivariant map $f: \mathbb{P}^1 \to X$ with $f(0)=x$ and $f(\infty)=y$.
\end{thm}

Now recalling Theorem \ref{ghs} again: for a rationally connected fibration $\mcX \to B$, by the powerful smoothing of comb argument initiated by Prof.~Koll\'ar, Prof.~Miyaoka and Prof.~Mori in \cite{KMM92RC}, and a specialization argument cancelling monodromy of $C^*\to B$ around multiple fibers, one can find a ``flexible'' curve $C^*$ where the forgetful map $$ {\mcF}_{g,0}: \overline{\mcM}_{g,0}(\mcX, [C^*]) \to \overline{\mcM}_{g,0}(B, \pi_{*}[C^*])$$ is smooth and surjective. Degenerate $(C^* \to B)$ in the Hurwicz scheme $\mcH_{g,B}=\overline{\mcM}_{g,0}(B, \pi_{*}[C^*])$ to contain a component isomorphic to $B$, a preimage of this component will be a section. Quite unexpectedly, we have the following main observation
\begin{Observation} The situation of Question \ref{obs} inherits a variant of the above \emph{Graber-Harris-Starr} argument.
\end{Observation}

By taking Lefschetz pencils, it is easy to have a high genus $G$-equivariant curve $C$ embedded in $X$ connecting $x$ to $y$, an open neighbourhood $\mcU$ of $C$ in the $G$-fixed locus of the Kontsevich's moduli space of stable map $$\overline{\mcM}_{g,2}(X, [C])\{f(p)=x,f(q)=y\}$$ also parametrizes $G$--equivariant embedded maps marked twice connecting $x$ to $y$, and so it maps to ${\mcH}_{g,2}(G)$ as the Huwicz scheme of Galois covers $C\to C/G$ with two specified ramifications. So we have a rational map $$\xymatrixrowsep{1pc}\xymatrix{
{\mcF}^G_{g,2}: \overline{\mcM}^{G}_{g,2}(X, [C])\{f(x_1)=x, f(x_2)=y \} \ar@{.>}[r] & \mcH_{g,2}(G)
}$$ If one can find a good compactification of this rational map, then similar to \cite{GHS03}, the preimage of a good component ($\mathbb{P}^1 {\to} \mathbb{P}^1/G\cong \mathbb{P}^1$ totally ramified at $2$ points)  of some degenerated $G$-cover might give a $G$-equivariant rational curve connecting $x$ to $y$.

\subsection{Moduli compactification and tangent obstruction of ${\mcF}^G_{g,2}$}Instead of working with $G$-equivariant stable maps, passing to the stacky quotient map $[C/G] \to [X/G]$ helps to find such a compactification:~applying Remark \ref{stabilize} to the natural morphism $[X/G] \to \mcB G$ we get the forgetful map between proper algebraic stacks $$\xymatrixcolsep{2pc}\xymatrix{{[\mcF]}^G_{g',2}: \overline{\mcM}_{g',2}([X/G], [C/G])\{f'([x_1])=[x], f'([x_2])=[y] \} \ar[r] & \overline{\mcM}_{g', 2}(\mcB G).}$$
Here $g'$ is the genus of the stacky curve $[C/G]$, in order to apply the lift argument of \emph{Graber-Harris-Starr}'s paper \cite{GHS03}, the remained problem is to analysis the tangent obstruction of this map and to deal with possible monodromy problems which arise in the cover $C \to C/G$. As we will discuss in the following: for the first problem, we restrict to a special type of twisted map where the deformation theory is easier to describe; and for the second we construct a special high genus equivariant curve $C$ admitting a special degeneration that cancels the monodromy.

We restrict to a special type of $G$-equivariant stable map or twisted stable map that has simpler deformation and tangent obstruction for ${[\mcF]}^G_{g',2}$.
\begin{defn}
For a stable nodal curve $C$ with a $G$-action,  containing $p$, $q$ as two marked points, we say $(C,p,q)$ is $G$-simple if the follows are satisfied:
 \begin{itemize}
 \item The $G$ action on $C$ is effective.
 \item $p$, $q$ are fixed points of $G$.
 \item For any node $n$ of $C$, the stabilization subgroup of $G$ that fixed $n$ as $Stab_G(n)$ is trivial.
 \end{itemize}
Let $(C,p,q)$ be a $G$-simple nodal curve, and we assume that there is a $G$-equivariant morphism $f: C \to X$. We say $f: (C, p , q)  \to (X, f(p) , f(q))$ is a $G$-simple map if $f: (C,p,q) \to X$ is a stable map and that $f$ is an immersion.
\end{defn}
\begin{rem}\label{simple}
 We note that a $G$-simple curve can be $G$-equivariantly smoothed to a smooth and irreducible  curve with marking $p$, $q$ which is again $G$-simple. Actually by taking quotients $[C/G]$, they form a special class of \emph{balanced} twisted stable nodal $2$ point curves as defined in \cite{twisted} and \cite{ol}, since  $[C/G]$ has no stacky structure at the nodes. And for a $G$-simple map $f: (C, p , q)  \to X$, clearly it gives a twisted stable map $$[f]: ([C/G], [p/G],[q,G]) \to ([X/G], [f(p)/G] , [f(q)/G]).$$  The normal sheaf $\mcN_{f/X}$ is locally free with a natural $G$-linearilization. In particular, it gives a special type of \emph{balanced} twisted stable $2$ point maps in the sense of \cite{twisted} and \cite{ol}.
\end{rem}
\begin{prop}\label{tangent}
 Let $f: (C, p , q)  \to (X, f(p) , f(q))$ be a $G$-simple equivariant map. Assume $f(p)=x, f(q)=y$, and $H^1(C,f^{*}\mcT_X (-p-q))^G=0$. Then  $$\xymatrixcolsep{2pc}\xymatrix{{[\mcF]}^G_{g',2}: \overline{\mcM}_{g',2}([X/G], [C/G])\{f'([p])=[x], f'([q])=[y] \} \ar[r] & \overline{\mcM}_{g', 2}(\mcB G)}$$ will be smooth and surjective at the point $([f]: [C/G] \to [X/G]).$
\end{prop}
\begin{proof}
We have the following long exact sequence of cohomology groups
\[
0 \to Ext^0(\Omega_C(p+q), \OO_C) \to Ext^0(f^*\Omega_{X}(p+q), \OO_C) \to
\]
\[
\to \mathbb{H}^1({{\RR}Hom_{\OO_C}(\Omega^{\cdot}_f(p+q),\OO_C)}) \to Ext^1(\Omega_C(p+q), \OO_C) \to
\]
\[
\to Ext^1(f^*\Omega_{X}(p+q), \OO_C) \to \mathbb{H}^2({{\RR}Hom_{\OO_C}(\Omega^{\cdot}_f(p+q), \OO_C)})\to 0
\]

The deformation and obstruction space of the stable map $(f: (C, p, q) \to (X, x, y))$ are
\[
Def(f)=\mathbb{H}^1(C,\mathbb{R}Hom_{\OO_C}(\Omega_f^{\cdot}(p+q),\OO_C))
\]
 \[
Obs(f)=\mathbb{H}^2(C,\mathbb{R}Hom_{\OO_C}(\Omega_f^{\cdot}(p+q),\OO_C))
\]
where
$\Omega^{\cdot}_f(p+q)$ is the complex
\[
\begin{CD}
-1 && 0\\
f^*\Omega_X(p+q) @> df^\dagger >> \Omega_C(p+q).
\end{CD}
\]
Since $(f: (C, p, q) \to (X, x, y))$  is $G$-simple, by Remark \ref{simple}, $\Omega^{\cdot}_f(p+q)$ is qusi-isomorphic to $Hom_{\mcO_C}(\Omega_C, \mcN_{f/X})(p+q)$ which is locally free with a natural $G$-linearilization, and the deformation and obstruction space of the stable map $$(f: ([C/G], [p/G], [q/G]) \to ([X/G], [x/G], [y/G]))$$ is the $G$-invariant part of $Def(f), Obs(f)$. Since $C$ admits $G$-equivariant smoothings, the deformation space of the stacky curve $([C/G], [p/G], [q/G])$) will be the $G$-invariant part of $Ext^1(\Omega_C(p+q))$. And the tangent map between $$\overline{\mcM}_{g', 2}([X/G], [C/G])\{[f]([p])=[x], [f]([q])=[y]\}$$ and $\overline{\mcM}_{g', 2}(\mcB G)$ is simply $$\mathbb{H}^1({{\RR}Hom_{\OO_C}(\Omega^{\cdot}_f(p+q),\OO_C)})^G \to Ext^1(\Omega_C(p+q), \OO_C)^G$$

So if $H^1(C, f^*T_X(-p-q))^G=0$, the morphism ${[\mcF]}^G_{g',2}$ is smooth at the point represented by $[f]: ([C/G], [p/G], [q/G]) \to ([X/G], [x/G], [y/G])$ and the forgetful map is smooth at this point. Thus it is surjective when restricted to the unique irreducible component containing this stable map since the forgetful map is also proper.

\end{proof}
\subsection{A pencil in $\overline{\mcM}_{g',2}(\mcB G)$}
Even with Proposition \ref{tangent} in hand, we still meet with delicate monodromy problem in order to find a twisted curve containing $\mathbb{P}^1 {\to} \mathbb{P}^1/G\cong \mathbb{P}^1$ totally ramified at $2$ points in $\overline{\mcM}_{g', 2}(\mcB G)$.

As in Proposition \ref{tangent}, one would like to degenerate the nodal curve $C$ so that the two points $p,q$ mapped to the fixed points come together and lie in an irreducible component which is isomorphic to $\mathbb{P}^1$ with the canonical  $G$ action. It is always possible to degenerate the curve
with the points coming together. But in order that the two points lie in a $\mathbb{P}^1$ with
a $G$ action, the monodromy around $p,q$ has to be inverse to each other. The following proposition gives such a kind of degeneration  in $\overline{\mcM}_{g',2}(\mcB G)$.

\begin{prop}\label{pencil}
There is a $G$-simple nodal curve $\hat{C}$ marked twice at fixed points $p,q$, and a pencil of $G$-simple curves $\mcC \to \mathbb{P}^1$ which deforms $(\mcC_0 \cong \hat{C} , p, q)$  to smooth $G$-simple curve $(\mcC_t, p_t, q_t)$ for general $ t \in \mathbb{P}^1$. And $\mcC_{\infty} = C_1 \cup C_2$ where both $C_1$ and $C_2$ are isomorphic to $\mathbb{P}^1$ with the canonical $G$-action, and the marking $p_{\infty}$ and $q_{\infty}$ of $\mcC_{\infty}$ is concentrated on the $0$ and $\infty$ in $C_1$.

In particular, by taking quotients $[\mcC/G] \to \mathbb{P}^1$ gives a pencil of twisted maps in $\overline{\mcM}_{g',2}(\mcB G)$, which deforms a high genus twisted curve marked twice to contain a component isomorphic to $([\mathbb{P}^1/G],[0/G],[\infty/G])$.
\end{prop}

\begin{proof}
Let $G$ acts on $\PP^1 \times \PP^1$ by $g \cdot ([X_0, X_1], [Y_0, Y_1]) \mapsto ([X_0, \zeta \cdot X_1], [Y_0, Y_1])$, where $\zeta$ is a primitive $l$-th root of unity. Consider the reducible nodal curve
\[
\hat{C}=V((X_0^{2l}-X_1^{2l})Y_0Y_1)\subset \PP^1 \times \PP^1.
\]
There is a natural $G$ action on $\hat{C}$ by restricting the action on $\PP^1 \times \PP^1$. We take two marked points $p=([1, 0], [1, 0]), q=([0, 1], [0, 1])$.

One can directly write down such a pencil. In $\PP^1 \times \PP^1$, take the pencil spanned by the curves $(X_0^{2l}-X_1^{2l})Y_0Y_1=0$ and $(X_0^lY_1+X_1^l Y_0)(X_1^l Y_1+X_0^l Y_0)=0$. Note that the curve defined by $(X_0^lY_1+X_1^l Y_0)(X_1^l Y_1+X_0^l Y_0)=0$ is the union of two smooth rational curve meeting transversely at $2l$ points, both of which has a natural $G$ action. Moreover, the irreducible component $X_0^lY_1+X_1^l Y_0=0$ contains $p, q$ as the fixed points of the $G$ action. A general member of the pencil is a stable curve since $C$ is stable. And every member of the family has a natural $G$ action and contains $p, q$ as two fixed points. So set $p_t=p$, $q_t=q$, this gives a family of $G$-simple curves satisfying out hypothesis.

\end{proof}
\subsection{Final proof of Theorem \ref{thm:Hypersurface}}

With Proposition \ref{tangent} and Proposition \ref{pencil} at hand, we need one more lemma to prove Theorem \ref{thm:Hypersurface}, which is an application of the equivariant smoothing of comb-Theorem \ref{equivariant smoothing}.
\begin{lem}\label{lem:EquivSmoothing}
Assume $G \cong \mathbb{Z}/l\mathbb{Z}$. Let $X$ be a smooth quasi-projective separably rationally connected $k$-variety with a $G$ action. Let $G$ actions on $\PP^1$ by $z \mapsto \zeta z$, where $\zeta$ is a primitive $l$-th root of unity.
\begin{enumerate}
\item Let $f: \PP^1 \rightarrow X$ be a $G$-equivariant map. Then there is a $G$-equivariant very free curve $\tilde{f}:\p \rightarrow X$ with $\tilde{f}(0)=f(0)$, $\tilde{f}(\infty)=f(\infty)$.

\item Let $f_i: C_i \rightarrow X, 1\leq i \leq n$ be a chain of equivariant maps, i.e.$f_i(\infty)=f_{i+1}(0)$ and $C_i \cong \mathbb{P}^1$ for all $i$'s. Then there is a $G$-equivariant map $\tilde{f}: \p \rightarrow X$ with $\tilde{f}(0)=f_1(0)$ and $\tilde{f}(\infty)=f_n(\infty)$.
\end{enumerate}
\end{lem}
\begin{proof}
For part $(1)$, Let $G$ acts on $\mathbb{P}^M$ by $[x_0,...,x_M]\to [x_0,..., \zeta x_M]$ (or any other effective action) with an embedded $G$-equivariant rational curve $f':\mathbb{P}^1 \to \mathbb{P}^M$, e.g.~$x_1=...=x_{M-1}=0$. For $M\gg 0$, we may assume that the equivariant map $f$ is an embedding and $dim\ X \geq 3$ by replacing $X$ with $X \times \mathbb{P}^M$ and replacing $f: \PP^1 \rightarrow X$ with the diagonal $G$-equivariant map $$\hat{f}: \PP^1 \rightarrow X\times \mathbb{P}^M$$ defined by $\hat{f}(x)=(f(x),f'(x)).$ We note there that for $dim\ X\geq 2$, $M=1$ is enough.  Let $C_0$ be the image
of the morphism $f$. Then $(1)$ follows by applying Theorem \ref{equivariant smoothing} to $f_0=f$, $d=2$ and $M=\OO_{C_0}(-0-\infty)$.

For part $(2)$, we may assume that all the $f_i$'s are very free by the first part. Let $f$ be the $G$-equivariant map obtained by gluing the $f_i$'s. Let $(T, o)$ be a pointed smooth curve with a trivial $G$-action. And let $\tilde{\Sigma}$ be $\PP^1\times T$ with the natural diagonal action. There are two $G$-equivariant sections, $s_0=0 \times T, s_\infty=\infty \times T$. Blow up $s_\infty(o)$ with an extension of the $G$-action and equate $s_0,s_{\infty}$ with their strict transforms. Doing this $n$ times, we get a smooth surface $\Sigma$ with a chain of $n$ rational curves as the fiber over $o \in T$. Let $h_0: s_0 \rightarrow X\times T$ and $h_\infty: s_\infty \rightarrow X \times T$ be $T$-morphisms such that $h_0(s_0)=f_1(0)\times T$ and $h_\infty(s_\infty)=f_n(\infty) \times T$. Consider the map $$\mu: Hom_T(\Sigma, X\times T, h_0, h_\infty) \rightarrow T$$ where $\mcH om_T(\Sigma, X\times T, h_0, h_\infty)$ parameterizes $T$-morphisms from $\Sigma$ to $X \times T$ fixing $h_0$ and $h_\infty$, it has a natural $G$ action and the map $\mu$ is $G$-equivariant. Now it is easy to show that $\mu$ is smooth at $f$ since $f_i$'s are very free. So there is a $G$-equivariant smoothing of the morphism $f$ by Corollary \ref{sm-equiv-lifting}.
\end{proof}

\begin{proof}[Proof of Theorem \ref{thm:Hypersurface}]
 Given two fixed points $x$ and $y$, there is a very free rational curve $f_1: C_1\cong \PP^1 \to X$ such that $f(0)=x, f(\infty)=y$. The $G$-orbit of the morphism $f_1$ consists of $l$ very free curves $f_i: C_i \cong \PP^1 \to X, f_i(0)=x, f_i(\infty)=y, i=1, \ldots, l$. Take another $G$-orbit of very free curves $g_i: D_i \cong \PP^1 \to X, g_i(0)=x, g_i(\infty)=y, i=1, \ldots, l$.

Now consider the special $G$-nodal curve as in Proposition \ref{pencil}
\[
\hat{C}=V((X_0^{2l}-X_1^{2l})Y_0Y_1)\subset \PP^1 \times \PP^1.
\]

One can define a $G$-equivariant morphism $f: \hat{C} \to X$ whose restriction to the curve $Y_0=0$ (resp. $Y_1=0$) is the constant map to $x$ (resp. $y$), to the curve $V(X_0-\zeta^i X_1=0) \cong C_i$ the map $f_i$, to the curve $V(X_0-\zeta^i \xi X_1=0) \cong D_i$ the map $g_i$, where $\xi$ is a root of the equation $T^l+1=0$. Take the marking $p=([1, 0], [1, 0]), q=([0, 1], [0, 1])$ as in Proposition \ref{pencil}. The triple $(f: \hat{C} \to X, f(p)=x, f(q)=y)$ is a $G$-equivariant stable map of genus $g$ with two marked points.

We note there that although $\hat{C}$ is a $G$-simple nodal curve, $(f: \hat{C} \to X, f(p)=x, f(q)=y)$ is not necessarily $G$-simple. But we claim that there is a smooth projective separably rationally connected variety $\hat{X}$ with a $G$ action, $2$ fixed points $\hat{x},\hat{y}$, and a $G$-equivariant morphism $X' \to X$ which maps $\hat{x}$,$\hat{y}$ to $x$,$y$ respectively, such that there is a lifting $$\xymatrix{
(\hat{C},p,q) \ar@{->}[r]\ar[rd] & (\hat{X},\hat{x},\hat{y}) \ar[d]\\
    & (X,x,y)
 }$$
 where  $(\hat{f}: \hat{C} \to \hat{X}, \hat{f}(p)=\hat{x}, \hat{f}(q)=\hat{y})$ is $G$-simple. To see this first we take the product $X\times \mathbb{P}^M$ for $M\gg 0$ and take suitable preimages $x'$, $y'$ of $x$, $y$. Here $\mathbb{P}^M$ has a suitable $G$ action similar to the  proof of part $1$ in Lemma \ref{lem:EquivSmoothing}. So we can assume that the very free curves $C_i$'s are all immersed rational curves which are distinct with eath other and has different tangent directions at $x'$ and $y'$. Then we blow-up along the preimages $x'$, $y'$ of $x,y$ on $X\times \mathbb{P}^M$ to get $\hat{X}$ as desired.

 Replacing $X$ by $\hat{X}$, we may assume there is a $G$-simple map $(f: \hat{C} \to X, f(p)=x, f(q)=y)$ which also gives rise to a twisted stable map $[f]: ([\hat{C}/G], [p/G], [q/G]) \to ([X/G], [x/G], [y/G])$. By Theorem \ref{relative smoothing} and part $1$ of Lemma \ref{lem:EquivSmoothing}, up to adding very free curves and smoothing, we may assume that $H^1(\hat{C}, f^*\mcT_X(-p-q))^G=0$. Then by Proposition \ref{tangent}, there is a surjection $$\mcS: \mcU_{([f]: [\hat{C}/G] \to [X/G])} \to \mcV_{\hat{C}}$$ where  $\mcU_{([f]: [\hat{C}/G] \to [X/G])}$ and $\mcV_{\hat{C}}$ are the unique components in the moduli stacks $\overline{\mcM}_{g',2}([X/G], [\hat{C}/G])\{f'([x_1])=[x], f'([x_2])=[y] \}$ and $\overline{\mcM}_{g', 2}(\mcB G)$ containing $[f]$ and $\hat{C}$ respectively.

Now as in Proposition \ref{pencil}, one can deform the curve $[\hat{C}/G]$ in such a way that the two stacky points lie in a stacky $[\PP^1/G]$, where the $G$ action on $\PP^1$ is $[X_0, X_1] \mapsto [X_0, \zeta X_1]$-since the general member $[C_t/G]$ this deformation is parametrized by an irreducible curve (actually $\mathbb{P}^1$), the resulting curve $C_t$ is also in the component $\mcV_{\hat{C}}$ containing $\hat{C}$.

 Thus by surjectivity of $\mcS$ there is a preimage as a twisted nodal curve $\mcC^p_t$ mapped to $[C_t/G]$ with a representable morphism which maps the two marked points to $[x/G]$ and $[y/G]$ in $[X/G]$. By Remark \ref{stabilize}, $$\mcC^p_t \mapsto [C_t/G]$$ will contract non-stable components which are again all isomorphic to $[\mathbb{P}^1/G']$ marked at the two total ramifications where $G'$ is a cyclic group with order divisible in $k$. By replacing with a suitable cover, one can assume that $G'=G$, so this is a chain of stacky curves $[\PP^1/G]$ connected at the total ramification points, which is equivalent to a chain of $G$-equivariant morphisms for $\mathbb{P}^1$ to $X$ connecting $x$ to $y$. So finally by part $2$ of Lemma \ref{lem:EquivSmoothing}, we have a $G$-equivariant morphism from $\PP^1$ to $X$ mapping $0$ and $\infty$ to $x, y$.
\end{proof}

\section{Proof of Theorem \ref{m} using relative G-equivariant smoothing}

By Lemma \ref{lem:basechange}, for an isotrivial family, at least in the formal neighborhood, we can find a ramified base change and a birational modification so that the new central fiber becomes smooth and the Galois group acts on the total space of the formal neighborhood, namely, it satisfies Hypothesis \ref{hyp}.

The first goal is to show that we can make the cyclic base change globally on the curve $B$. Then to get back to the original family, one just need to remember the Galois group action and do things in a $G$-equivariant way.

Given finitely many points $x_1, x_2, \ldots, x_n$ in $B$, and any positive integer $l$, there is a cyclic cover of degree $l$ of $B$ which is totally ramified over $x_1, \ldots, x_n$ (and other points). To see this, take a general Lefschetz pencil which maps $x_1, \ldots, x_n$ (and other points) to $0 \in \PP^1$ and is unramified over these points. Take a degree $l$ map $B_1=\PP^1 \to \PP^1, [X_0, X_1] \mapsto [X_0^l, X_1^l]$ and let $C=B\times_{\PP^1} B_1$ be the fiber product. Then $C$ is the desired cyclic cover. Note that we have the freedom to increase the number of branched points so that the genus of $C$ can be arbitrarily large. We could also choose the cover $C \to B$ so that the preimages of $b_1, \ldots, b_k$ are $l$ distinct points.

For different points on the base $B$, the base change we need may have different degrees. But we can approximate the formal sections one by one (again using the iterated blow-up to fix jet data) so that each time we only need to deal with a single base change. So Theorem \ref{m} is reduced to

\begin{thm}\label{thm:G-equiv}
Let $G$ be a cyclic group of order $l$ and let $\mcX$ (resp. $C$) be a smooth proper variety (resp. a smooth projective curve) with a $G$-action. Let $\pi: \mcX \to C$ be a flat family of rationally connected varieties. Assume the following:
\begin{enumerate}
\item The morphism $\pi$ is $G$-equivariant.
\item There is a $G$-equivariant section $s: C \to \mcX$.
\item The $G$-action on $C$ has a fixed point $p$ and the action of $G$ near $p$ is given by $t \mapsto \zeta t$, where $t$ is a local parameter and $\zeta$ is a primitive $l$-th root of unity.

\item The fiber of $\pi: \mcX \to C$ over the point $p$ is smooth.
\end{enumerate}
Then for any positive integer $N$, and any $G$-equivariant formal section $\whts: \SP \whtOO_{p, C} \to \mcX$, there is a $G$-equivariant section $s'$ which agrees with the formal section $\whts$ to order $N$.
\end{thm}

\subsection{Idea and formal set-up}
The idea of the proof goes back to \cite{HT06}. Namely, we would like to add suitable rational curves to the given section and make $G$-equivariant deformations to produce a new section with prescribed jet data. The only subtlety in the proof is that in general we cannot choose the rational curves to be immersed. So instead of working with the normal sheaf as is done in \cite{HT06}, we work with the complex $\Omega_f$ defined as

\[
\begin{CD}
-1 & & 0 \\
f^*\Omega_X @> df^\dagger >> \Omega_C.
\end{CD}
\]
and its derived dual in the derived category. All the tensor products, duals, pull-backs, and push-forwards in the proof should also be taken as the derived functors in the derived category.

The following is a general form of the commonly used short exact sequences (of normal sheaves) which govern the deformation of a stable map from a nodal domain.

\begin{lem}\label{lem:def}
Let $f:C \cup D \to X$ be a morphism from a nodal curve $C\cup D$ with a single node to a smooth variety $X$ and $f_0$ (resp. $f_1$) the restriction of $f$ to $C$ (resp. $D$). Then
\begin{enumerate}
\item
We have the following distinguished triangles:
\[
\Omega_{f}^{\vee}\otimes \OO_D(-n) \to \Omega_f^{\vee} \to \Omega_f^{\vee} \otimes \OO_C \to \Omega_{f}^{\vee}\otimes \OO_D(-n)[1]
\]
\[
\Omega_{f_0}^{\vee} \to \Omega_f^{\vee}\otimes \OO_{C}  \to \epsilon[-1] \to \Omega_{f_0}^{\vee}[1]
\]
where $n$ is the preimage of the node in $D$, and $\epsilon$ is a skyscraper sheaf supported at the preimage of the node in $C$.
\item Let $G$ be a cyclic group of order $l$. Assume that there is a $G$-action on $C\cup D$ fixing each irreducible component. Then the node is a fixed point of the action and there is a natural $G$-action on all the complexes above. If locally around the node, the action is given by
\[
\begin{CD}
\CC[x, y]/xy @>>> \CC[x, y]/xy \\
(x, y)@>>> (\zeta x, \zeta^{-1} y),
\end{CD}
\]
where $\zeta$ is a primitive $l$-th roots of unity, then the $G$-action on $\epsilon$ is trivial.
\end{enumerate}
\end{lem}
\begin{proof}
The first distinguished triangle comes from restriction to the component $C$.

For the second distinguished triangle, consider the following distinguished triangles and the map between them:
\[
\begin{CD}
\Omega_{C\cup D} \otimes \OO_C @>>> \Omega_f \otimes \OO_C@>>> f^*\Omega_X \otimes \OO_C [1]@>>>\Omega_{C\cup D}\otimes \OO_C[1]\\
@VVV@VVV@|@VVV\\
\Omega_{C}  @>>> \Omega_{f_0} @>>> f_0^*\Omega_X[1] @>>>\Omega_{C}[1]
\end{CD}
\]
Therefore we have distinguished triangles
\[
\Omega_{C\cup D}\otimes \OO_C \to \Omega_C \to Q[1]\to \Omega_{C \cup D}\otimes \OO_C[1]
\]
\begin{equation}\label{eq:tri}
\Omega_f \otimes \OO_C \to \Omega_{f_0} \to Q'[1] \to \Omega_f \otimes \OO_C[1],
\end{equation}
\[
Q[1] \to Q'[1] \to 0 \to Q[2].
\]
where $Q$ is a skyscraper sheaf supported at the node. The last distinguished triangle shows that $Q \cong Q'$. Taking dual of the distinguished triangle (\ref{eq:tri}) gives the second triangle in the lemma.

Part 2 of the lemma can be proved by a local computation. Or we can argue that the sheaf $\epsilon$ corresponds to a $G$-equivariant smoothing of the node. Therefore it has to be $G$-invariant.
\end{proof}

Now we begin the proof.

\begin{proof}[Proof of Theorem \ref{thm:G-equiv}] The proof is divided into two steps.

\subsection{Step 1: Approximation at $0$-th order}

We may assume that $$H^1(C, \mcN_{C/\mcX}(-p))=0$$ by the same argument as in Lemma \ref{lem:EquivSmoothing}.

The section $s$ and the formal section $\whts$ intersect the fiber $\mcX_p$ at two fixed points of the $G$-action. Take a rational curve $D\cong \PP^1$ with a $G$-action as $[X_0, X_1] \mapsto [\zeta X_0, X_1]$, where $\zeta$ is the primitive $l$-th root of unity in the assumptions. By Theorem \ref{thm:Hypersurface}, there is a $G$-equivariant very free curve $ D\cong \PP^1 \to \mcX_p \to \mcX$ which maps $0=[1, 0]$ to $s(p)$ and $\infty=[0, 1]$ to $\whts(p)$. Let $f: C \cup D \to X$ be the nodal curve by combining the section and the curve $D$ and $f_0$ (resp. $f_1$) the restriction of $f$ to $C$ (resp. $D$).

By Lemma \ref{lem:def}, we have the following distinguished triangles:
\[
\Omega_{f}^{\vee}(-\infty) \otimes \OO_C(-p) \to \Omega_f{^{\vee}}(-\infty) \to \Omega_f{^{\vee}}(-\infty) \otimes \OO_D \to \Omega_{f}^{\vee}\otimes \OO_C(-p)[1]
\]
\[
\Omega_{f_0}^{\vee}(-p) \to \Omega_f^{\vee}\otimes \OO_{C}(-p)  \to \epsilon[-1] \to \Omega_{f_0}^{\vee}(-p)[1]
\]
\[
\Omega_{f_1}^{\vee}\otimes \OO_D(-\infty) \to \Omega_f^{\vee}\otimes \OO_{D}(-\infty)  \to \epsilon'[-1]\to \Omega_{f_1}^{\vee}\otimes \OO_D(-\infty)[1]
\]
where $\epsilon$ and $\epsilon'$ are torsion sheaves supported at the node of $C$ and $D$. Every complex has a natural $G$-action, and the $G$-actions on $\epsilon$ and $\epsilon'$ are trivial. Also note that $$\Omega_f(-\infty) \otimes \OO_C \cong \Omega_f \otimes \OO_C.$$

Taking hypercohomology gives long exact sequences
\begin{align}\label{eq:long1}
&0\to \HH^1(\Omega_{f}^{\vee}\otimes \OO_C(-p)) \to \HH^1(\Omega_f^{\vee}(-\infty)) \to \HH^1(\Omega_f^{\vee}(-\infty) \otimes \OO_D)\\
 \to &\HH^2(\Omega_{f}^{\vee}\otimes \OO_C(-p))
\to \HH^2(\Omega_f^{\vee}(-\infty)) \to \HH^2(\Omega_f^{\vee}(-\infty) \otimes \OO_D) \to \ldots,\nonumber
\end{align}

\begin{align}\label{eq:long2}
&0\to \HH^1(\Omega_{f_0}^{\vee}\otimes \OO_C(-p)) \to \HH^1(\Omega_f^{\vee}\otimes \OO_{C}(-p)) \to \epsilon\\
\to & \HH^2(\Omega_{f_0}^{\vee}\otimes \OO_C(-p)) \to \HH^2(\Omega_f^{\vee}\otimes \OO_{C}(-p))  \to 0,\nonumber
\end{align}
and
\begin{align}\label{eq:long3}
&0\to \HH^1(\Omega_{f_1}^{\vee}\otimes \OO_D(-\infty)) \to \HH^1(\Omega_f^{\vee}\otimes \OO_{D}(-\infty))  \to \epsilon'\\
\to & \HH^2(\Omega_{f_1}^{\vee}\otimes \OO_D(-\infty)) \to \HH^2(\Omega_f^{\vee}\otimes \OO_{D}(-\infty))  \to 0.\nonumber
\end{align}
 Note that $\Omega_{f_0}^{\vee}$ is quasi-isomorphic to $\mcN_{C/\mcX}[-1]$. Thus by the second long exact sequence,
\[
\HH^2(\Omega_{f_0}^\vee \otimes \OO_C(-p))=\HH^2(\Omega_f^{\vee} \otimes \OO_C(-p))=0.
\]

Note that $\Omega_{f_1}^{\vee}$ is quasi-isomorphic to a shifted sheaf $\mcN[-1]$, where $\mcN$ is defined as the quotient in
\[
0 \to T_D \to f^*T_X \to \mcN\cong f^*T_X/T_D \to 0.
\]
Since $f^*T_X$ is globally generated, $\HH^2(\Omega_{f_1}^{\vee}\otimes \OO_D(-\infty))=0$. Then by the third long exact sequence,
\[
\mathbb{H}^2(\Omega_{f}^{\vee}\otimes \OO_D(-\infty))=0.
\]

Therefore by the long exact sequence (\ref{eq:long1}),
\[
\HH^2(\Omega_f^{\vee}(-\infty))=0,
\]
and thus the $G$-equivariant deformation of the nodal curve $C\cup D$ with the point $\infty$ fixed is unobstructed.

Then by the long exact sequences (\ref{eq:long1}), (\ref{eq:long3}) and the vanishing, the composition of maps
\[
\HH^1(\Omega_f^{\vee}(-\infty))^G \to \HH^1(\Omega_f^{\vee}(-\infty) \otimes \OO_D)^G \to \epsilon'
\]
is surjective. Thus there is a $G$-equivariant deformation with $\infty$ fixed which smooths the node between $C$ and $D$.

\subsection{Step 2: Approximation at higher order}

Assume that we have a section, still denoted by $s$, which agrees with $\whts$ to the $k (\geq 0)$-th order. We want to find a section agreeing with $\whts$ to order $k+1$.

Now let $\mcX_{k+1}$ be the $(k+1)$-th iterated blow-up of $\mcX$ associated to the formal section $\whts$. Then $G$ also acts on $\mcX_{k+1}$ and the projective to $C$ is $G$-equivariant. By abuse of notations, still denote the strict transforms of $s$ and $\whts$ by $s$ and $\whts$. Then they both intersect the exceptional divisor $E_{k+1} \cong \PP^d$ at fixed points of $G$. Assume the intersection points are different, otherwise there is nothing to prove.

Again we assume that $H^1(C, \mcN_{C/\mcX_{k+1}}(-p))=0$.

The key lemma is the following.

\begin{lem}\label{comb}
There is a comb $f: C\cup D \to \mcX_{k+1}$ from a nodal domain consisting of the given section $s(C)$ and suitable rational curves in the fiber such that
\begin{itemize}
\item $D=D_{k+1} \cup \cup_{j=1}^l R_j$, where $D_{k+1}\cong \PP^1$ and $R_j=\cup_{i=1}^k D_{ij}$ is a chain of rational curves. Denote by $x_j$ the node that connects $D_{k+1}$ to $R_j$.
\item There is a $G$-action on $D$ in the following way. The $G$-action on $D_{k+1}$ is given by
\[
[X_0, X_1] \mapsto [X_0, \zeta^{-1} X_1].
\]
The group $G$ acts on $R_j, j=1, \ldots, l$ via a cyclic permutation among them. In particular, the points $x_j \in D_{k+1}$ are conjugate to each other under the $G$-action.

\item The morphism $f: C \cup D \to X$ is $G$-equivariant.

\item The $G$-fixed point $\infty=[0, 1]$ on $D_{k+1}$ is mapped to $\whts(p)$, and $0=[1, 0]$ on $D_{k+1}$ connects $C$.
\item The morphism $f: C \cup D$ is an immersion except at $0$ and $\infty$ in $D_{k+1}$.
\item The complex $\Omega_f^{\vee}$ satisfies the following vanishing conditions.

\begin{equation}\label{1}
\HH^2(\Omega_f^{\vee} \otimes \OO_C(-p))=\HH^2(\Omega_f^{\vee} \otimes \OO_{D_{k+1}}(-0-\infty))=\HH^2(\Omega_f^{\vee} \otimes \OO_{D_{i j}}(-1))=0,
\end{equation}

\begin{equation}\label{2}
\HH^2(\Omega_f^\vee(-\infty))=0,
\end{equation}
\begin{equation}\label{3}
\HH^2(\Omega_f^\vee\otimes \OO_{D_{k+1}}(-\infty-x_1-\ldots-x_l))^G=0.
\end{equation}

\end{itemize}
\end{lem}

The construction is essentially the same as the one in \cite{HT06}, with the only difference coming from the consideration of the $G$-action. For an illustration of the comb $C\cup D$, see Figure. \ref{fig:comb} below and for the configuration of the comb with respect to the iterated blow-up $\mcX_{k+1}$, see Figure. \ref{fig:con}.

\setlength{\unitlength}{0.8in}
\begin{figure}[h]
 \centering
 \includegraphics[width=4.4in]{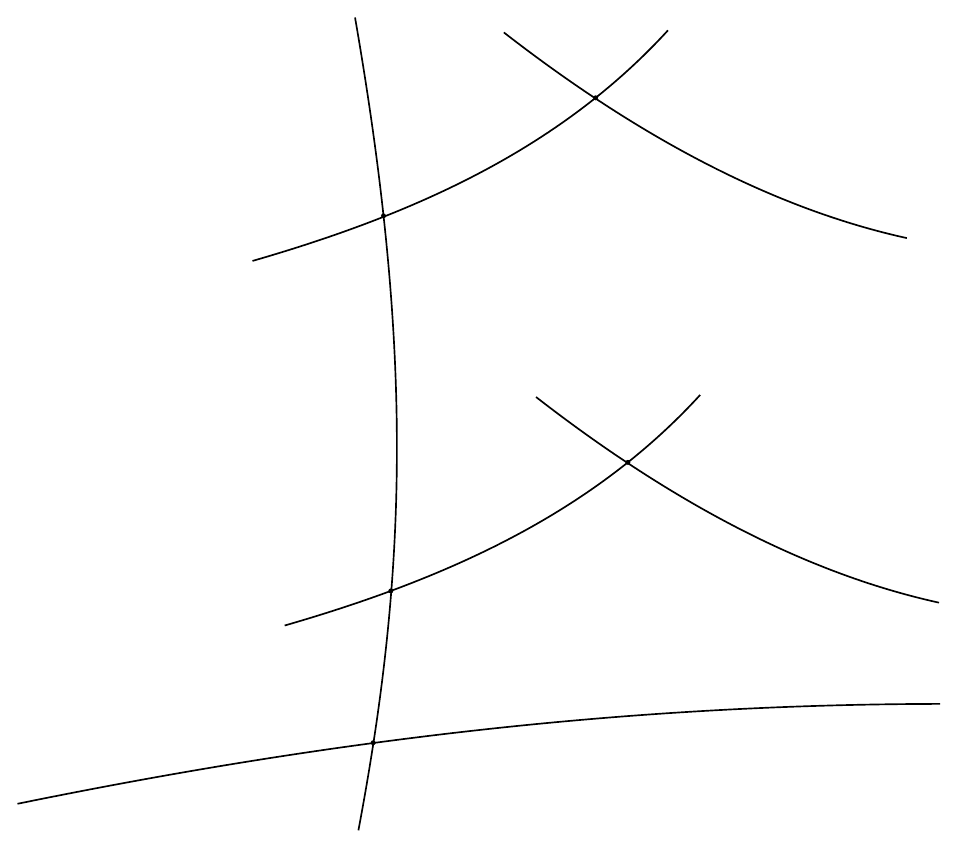}
 \put(-3.55,4.85){$D_{k+1}$}
 \put(-4.25,3.32){$R_1$}
 \put(-3.20,3.50){$x_1$}
 \put(-3.0, 2.50){$\dots\dots$}
 \put(-4.05,1.22){$R_l$}
 \put(-2.92,4.00){$D_{k1}$}
 \put(-3.18,1.30){$x_l$}
 \put(-1.52,4.00){$D_{k-1,1}$}
 \put(-2.72,1.90){$D_{kl}$}
 \put(-1.32,1.90){$D_{k-1,l}$}
 \put(-0.20,3.40){$\dots\dots$}
 \put(-0.05,1.35){$\dots\dots$}
 \put(-0.05,0.55){$C$}
 \caption{The comb $C\cup D$}
 \label{fig:comb}
\end{figure}

\begin{figure}[h]
 \centering
 \includegraphics[width=4.4in]{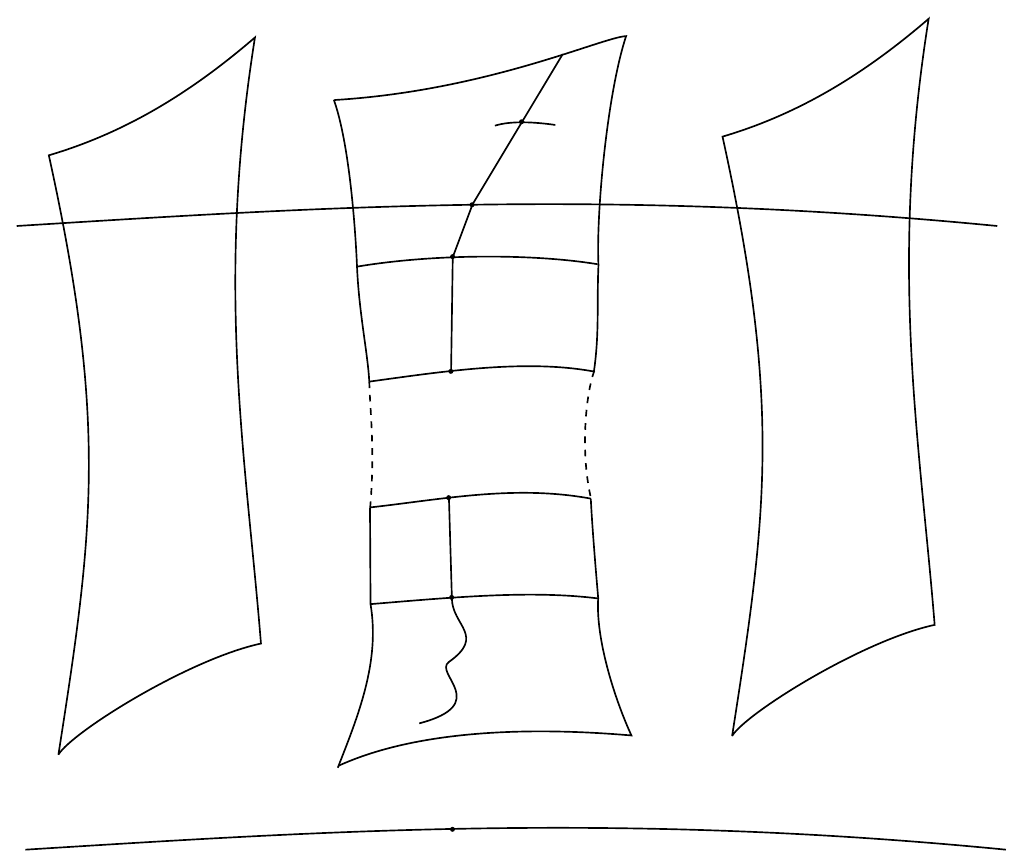}
 \put(-3.25,4.40){$E_{k+1}\cong\mathbb{P}^d$}
 \put(-3.60,3.75){$D_{k+1}\xrightarrow[]{l:1}L$}
 \put(-2.55,4.05){$\wht{s}(p)$}
 \put(-2.95,3.40){$S(p)$}
 \put(-3.25,3.12){$y_k$}
 \put(-3.45,2.70){$y_{k-1}$}
 \put(-3.05,3.10){$D_{k1}\dots D_{kl}$}
 \put(-3.05,2.95){are mapped}
\put(-3.05, 2.80){to this fiber}
 \put(-3.90,2.90){$E_k$}
 \put(-3.05,2.25){$\cdots$}
 \put(-3.25,1.85){$y_1$}
 \put(-3.05,1.83){$D_{11}\dots D_{1l}$}
\put(-3.05,1.68){are mapped}
\put(-3.05, 1.53){to this fiber}
 \put(-3.87,1.65){$E_1$}
 \put(-3.25,1.30){$y_0$}
 \put(-2.97,1.20){$D_{01}$}
 \put(-3.00,1.00){$\cdots$}
 \put(-3.05,0.80){$D_{0l}$}
 \put(-3.87,1.00){$E_0$}
 \put(-3.05,0.00){$P$}
 \put(-0.05,3.40){$S(C)$}
 \put(-0.05,0.00){$C$}
 \caption{Construction of the comb $C\cup D$}
\label{fig:con}
\end{figure}

\begin{proof}[Proof of Lemma \ref{comb}]
The line $L$ in $E_{k+1}\cong \PP^d$ joining $s(p)$ and $\whts(p)$ is invariant and intersects the exceptional divisor $E_{k}$ of $\mcX_{k+1}$ at a unique point $y_k$, which is necessarily a fixed point of $G$. Then there are $3$ fixed points in the line $L$ and thus all points are fixed points of $G$. Take a curve $D_{k+1}\cong \PP^1$. We impose a $G$-action on it by
\[
[X_0, X_1] \mapsto [X_0, \zeta^{-1} X_1].
\]
Take an $l$-to-$1$ $G$-equivariant map from $D_{k+1}$ to the line $L$ such that $0=[1, 0]$ is mapped to $s(p)$ and $\infty=[0, 1]$ is mapped to $\whts(p)$. There are $l$ points $x_1, \ldots, x_l$, which lie in the same orbit of $G$, being mapped to the point $y_k \in E_k \cap E_{k+1}$, where $E_{k}$ and $E_{k+1}$ are exceptional divisors of the $(k+1)$-th iterated blow-up.

The exceptional divisor $E_k$ is isomorphic to the blow-up of $\PP^d$ at a point, thus is a $\PP^1$-bundle over $\PP^{d-1}$. Let $D_{k, 1}, \ldots, D_{k, l}$ be $l$ copies of $\PP^1$ each mapped isomorphically to the fiber curve $\PP^1$ containing the point $y_k$.

Inductively, let $y_{i}$ be the intersection point of $D_{i+1, 1}$ with $E_i$ and $D_{i, 1}, \ldots, D_{i, l}$ be $l$ copies of $\PP^1$ each mapped isomorphically to the fiber $\PP^1$ containing the point $y_i$ for all $i=k-1, \ldots, 1$.

Finally let $y_0$ be the point of the intersection of $D_{1, 1}$ with the strict transform of $\mcX|_p$ and let $D_{0, 1} \ldots, D_{0, l}$ be $l$ copies of $\PP^1$ mapped to a very free curve in the strict transform of $\mcX_p$ intersecting $E_1$ at the point $y_0$. We may also assume that the maps are immersions.

Let $R_j$ be the chain of rational curves $\cup_{i=1}^{k}D_{i, j}$ connected to $D_{k+1}$ at the point $x_j$ for $j=1, \ldots, l$, and let $D$ be the curve $D_{k+1} \cup \cup_{j=1}^{l}R_{ j}$. There is a natural $G$-action on $D$, which permutes the $l$-chains of rational curves $R_j$ and acts on the irreducible component $D_{k+1}$ as specified above.

The restriction of the complex $\Omega_f^{\vee}$ to each curve $D_{i, j}$ is quasi-isomorphic to the normal sheaf with a shift $N_{f}[-1]$ (since the comb is an immersion along such curves). One can compute the restriction of $N_f$ to each curve $D_{i, j}$ as follows (see the proof of Sublemma 27, \cite{HT06}).
\begin{equation}\label{eqN}
\mcN_f|_{D_{i, j}}=\begin{cases}
\OO^{\oplus d}, & 1 \leq i \leq k,\\
\oplus_{n=1}^{d-1} \OO(a_n) \oplus \OO, a_n \geq 1 & i=0.
\end{cases}
\end{equation}

We now compute $\Omega_{f_{k+1}}^{\vee}$ on $D_{k+1}$, where $f_{k+1}$ is the restriction of the map to $D_{k+1}$ (i.e. the degree $l$ multiple cover of the line in $\PP^{d-1}$). This complex is quasi-isomorphic to the complex
\[
\begin{CD}
0 & & 1 \\
T_{D_{k+1}}\cong \OO(2) @>>> f_{k+1}^*T_{\mcX_{k+1}}\cong \OO(2l)\oplus \oplus_{i=1}^{d-2}\OO(l) \oplus \OO(-l),
\end{CD}
\]
Also note that the sheaf map $T_{D_{k+1}}\to f_{k+1}^*T_{\mcX_{k+1}}$ is injective and is the composition of maps $\OO(2) \to \OO(2l) \to f_{k+1}^*T_{\mcX_{k+1}}$.

We have a distinguished triangle
\begin{equation}\label{tricky}
\Omega_{f_{k+1}}^{\vee} \to \Omega_f^{\vee} \otimes \OO_{D_{k+1}} \to \epsilon[-1] \oplus \oplus_{j=1}^{l} \epsilon_j[-1]\to \Omega_{f_{k+1}}^{\vee}[1],
\end{equation}
where $\epsilon$ is a torsion sheaf supported at the node connecting $D_{k+1}$ and $C$, and $\epsilon_j$ is a torsion sheaf supported at the node connecting $D_{k+1}$ and $D_{k,j}$. The group $G$ acts on $\epsilon$ by the trivial action and acts on $\epsilon_j$ by permutation.

So the restriction of $\Omega_f^{\vee}$ to $D_{k+1}$ is quasi-isomorphic to the complex
\[
\begin{CD}
0 & & 1 \\
\OO(2) @>>>\OO(2l)\oplus \oplus_{i=1}^{d-2}\OO(l) \oplus \OO(1).
\end{CD}
\]
Since the above map maps the sheaf $\OO(2)$ injectively into the sheaf $\OO(2l)$, this complex is quasi-isomorphic to the shifted sheaf
\[
Q \oplus \oplus_{i=1}^{d-2}\OO(l) \oplus \OO(1) [-1],
\]
where $Q$ is the torsion sheaf defined as the quotient of $\OO(2) \to \OO(2l)$. Note that the $\OO(1)$ direction is the normal direction of the fiber.

Finally, the restriction of $\Omega_f^{\vee}$ to $C$ fits into the distinguished triangle
\[
\mcN_{C/\mcX_{k+1}}[-1] \to \Omega_f^\vee \otimes \OO_C \to \epsilon_0 \to \mcN_{C/\mcX_{k+1}},
\]
where $\epsilon_0$ is a torsion sheaf supported at the node.

Then the vanishing conditions (\ref{1}) are immediate from the identifications above.

By the distinguished triangle
\[
\Omega_f^\vee \otimes \OO_C(-p) \to \Omega_f^\vee(-\infty) \to \Omega_f^\vee \otimes \OO_D(-\infty) \to \Omega_f^\vee \otimes \OO_C(-p)[1]
\]
and the three vanishing results in \ref{1}, we know that
\[
\HH^2(\Omega_f^\vee(-\infty))=0.
\]
This is the vanishing in (\ref{2}).

The vanishing in (\ref{3}) needs a little bit more work since it is only the $G$-invariant part of the hypercohomology group that vanishes.  First notice the following.
\begin{lem}\label{lem:smoothCD}
Assume only that the comb $C \cup D$ satisfies vanishing results (\ref{1}) and (\ref{2}). Then a general $G$-equivariant deformation of $C\cup D$ with $\infty$ fixed is unobstructed and smooths the node connecting $C$ and $D_{k+1}$.
\end{lem}
\begin{proof}
The vanishing result (\ref{2}) implies that the $G$-equivariant deformation of $C\cup D$ with $\infty$ fixed is unobstructed.

We first consider the following distinguished triangles
\begin{equation}\label{t2}
\Omega_f^\vee \otimes \OO_{D}(-\infty-0) \to \Omega_f^\vee(-\infty) \to \Omega_f^\vee(-\infty)\otimes \OO_C \to \Omega_f^\vee \otimes \OO_{D}(-\infty-0)[1]
\end{equation}
and
\begin{align} \label{t1}
&\oplus_{j=1}^l \Omega_f^\vee  \otimes \OO_{R_j}(-x_j) \to \Omega_f^\vee \otimes \OO_{D}(-\infty-0) \to \Omega_f^\vee \otimes\OO_{D_{k+1}}(-\infty-0) \\
\to &\oplus_{j=1}^l \Omega_f^\vee  \otimes \OO_{R_j}(-x_j)[1].\nonumber
\end{align}

Recall that $\Omega_f^\vee  \otimes \OO_{R_j}$ is quasi-isomorphic to a shifted normal sheaf $\mcN_f \otimes \OO_{R_j}[-1]$, and the sheaves $\mcN\otimes \OO_{R_j}$ are locally free and globally generated by (\ref{eqN}). Therefore
\[
\HH^2(\oplus_{j=1}^l \Omega_f^\vee  \otimes \OO_{R_j}(-x_j))=0,
\]
and thus by the distinguished triangle (\ref{t1}),
\[
\HH^2(\Omega_f^\vee \otimes \OO_{D}(-\infty-0))=0,
\]
which, combined with the long exact sequence of hypercohomology of the distinguished triangle (\ref{t2}), implies that the map
\begin{equation}\label{4}
\HH^1(\Omega_f^\vee(-\infty))^G \to \HH^1(\Omega_f^\vee(-\infty)\otimes \OO_C)^G
\end{equation}
is surjective.

Then we look at the distinguished triangle
\[
\Omega_{f_0}^\vee \to \Omega_f^\vee \otimes \OO_C \to \epsilon_0[-1] \to \Omega_{f_0}^\vee[1],
\]
where $f_0$ is the restriction of $f$ to $C$ and $\epsilon$ is a skyscraper sheaf supported at the point $p$.

By the vanishing results (\ref{1}), the map
\begin{equation}\label{6}
\HH^1(\Omega_f^\vee(-\infty) \otimes \OO_C)^G \to (\epsilon_0)^G=\epsilon_0
\end{equation}
is surjective.

Note that $\Omega_f^\vee \otimes \OO_C \cong \Omega_f^\vee(-\infty)\otimes \OO_C$. Combining this identification and the surjectivity of maps in (\ref{6}) and (\ref{4}), we have proved that a general $G$-equivariant deformation with $\infty$ fixed smooths the node connecting $C$ and $D_{k+1}$.
\end{proof}

We have a distinguished triangle
\[
\Omega_{f_{k+1}}^\vee(-\infty) \to \Omega_f^\vee \otimes \OO_{D_{k+1}}(-\infty) \to \epsilon[-1] \oplus \oplus_{j=1}^{l} \epsilon_j[-1]\to \Omega_{f_{k+1}}^\vee(-\infty)[1],
\]
where $\epsilon$ is a torsion sheaf supported at $0 \in D_{k+1}$. This induces a map
\begin{equation}\label{8}
\HH^1(\Omega_f^\vee \otimes \OO_{D_{k+1}}(-\infty))^G \to \epsilon
\end{equation}
By Lemma \ref{lem:smoothCD}, a general deformation of $C\cup D$ with $\infty$ fixed is unobstructed and smooths the node connecting $C$ and $D_{k+1}$ (note that the proof of this result is independent of the vanishing (\ref{3})). Thus the composition
\[
\HH^1(\Omega_f^\vee(-\infty) \to \HH^1(\Omega_f^\vee \otimes \OO_{D_{k+1}}(-\infty))^G \to \epsilon
\]
is surjective. So the map in (\ref{8}) is also surjective.

Recall that $\Omega_f^\vee \otimes \OO_{D_{k+1}}(-\infty)$ is quasi-isomorphic to the shifted sheaf
\[
(Q \oplus \oplus_{i=1}^{d-2}\OO(l) \oplus \OO(1))\otimes \OO_{D_{k+1}}(-\infty) [-1],
\]
and the $\OO(1)$ direction is the normal direction of the fiber.

Moreover the map in (\ref{tricky}) is can be written as
\begin{align*}
&Q \oplus \oplus_{i=1}^{d-2}\OO(l) \oplus \OO(-l) [-1] \to Q \oplus \oplus_{i=1}^{d-2}\OO(l) \oplus \OO(1) [-1]\\
\to  &\epsilon[-1] \oplus \oplus_{j=1}^{l} \epsilon_j[-1]
\to Q \oplus \oplus_{i=1}^{d-2}\OO(l) \oplus \OO(-l)
\end{align*}

Thus only the $\OO(1)\otimes \OO_{D_{k+1}}(-\infty)$
 summand may have a non-zero map to $\epsilon$ in the above evaluation map in (\ref{8}). Thus the unique section in this summand (i.e. the section of $H^0(\OO(1)\otimes \OO_{D_{k+1}}(-\infty))=H^0(\OO_{D_{k+1}})$) is mapped to a non-zero element in $\epsilon$. Furthermore, this unique section, thought of as a section in $$\HH^1(\Omega_f^\vee \otimes \OO_{D_{k+1}})^G$$
via the inclusion
\[
\HH^1(\Omega_f^\vee \otimes \OO_{D_{k+1}}(-\infty))^G \to \HH^1(\Omega_f^\vee \otimes \OO_{D_{k+1}})^G
\]
only vanishes at $\infty \in D_{k+1}$. Therefore the map
\begin{equation}\label{eq:surj}
H^0( \OO(1) \otimes \OO_{D_{k+1}}(-\infty))^G \to (\oplus_{j=1}^l \epsilon_j)^G
\end{equation}
is surjective.

To prove the vanishing in (\ref{3}), we only need to consider the $\OO(1)$ summand since all the other summands have enough positivity to kill the higher cohomology $\HH^2$. For the $\OO(1)$ summand, consider the short exact sequence
\[
0 \to \OO(1)\otimes \OO_{D_{k+1}}(-\infty-x_1-\ldots-x_l) \to \OO(1) \otimes \OO_{D_{k+1}}(-\infty) \to \oplus_{j=1}^l \epsilon_j \to 0,
\]
which induces a map on the $G$-invariant part of cohomology
\begin{align*}
&H^0( \OO(1) \otimes \OO_{D_{k+1}}(-\infty))^G \to (\oplus_{j=1}^l \epsilon_j)^G \\
\to &H^1(\OO(1)\otimes \OO_{D_{k+1}}(-\infty-x_1-\ldots-x_l))^G \to H^1( \OO(1) \otimes \OO_{D_{k+1}}(-\infty))^G.
\end{align*}
Since the map (\ref{eq:surj}) is surjective and $H^1( \OO(1) \otimes \OO_{D_{k+1}}(-\infty))^G$ vanishes, we have
\[
H^1(\OO(1)\otimes \OO_{D_{k+1}}(-\infty-x_1-\ldots-x_l))^G=0,
\]
and thus
\[
\HH^2(\Omega_f^\vee \otimes \OO_{D_{k+1}}(-\infty-x_1-\ldots-x_l))^G=0.
\]
\end{proof}

We now finish the proof of step 2. Consider the distinguished triangles
\[
\Omega_{f}^{\vee}(-\infty) \otimes \OO_C(-p) \to \Omega_f^{\vee}(-\infty) \to \Omega_f^{\vee}(-\infty) \otimes \OO_D \to \Omega_{f}^{\vee}\otimes \OO_C(-p)[1]
\]
\begin{align*}
&\Omega_{f}^\vee\otimes \OO_{D_{k+1}}(-\infty-x_1-\ldots-x_l) \to \Omega_f^\vee \otimes \OO_{D}(-\infty)\\
 \to &\oplus_{j=1}^l \Omega_f^\vee \otimes \OO_{R_j} \to \Omega_{f}^\vee\otimes \OO_{D_{k+1}}(-\infty-x_1-\ldots-x_l)[1].
\end{align*}

The vanishings in (\ref{1}), (\ref{3}) imply that the map
\begin{equation}\label{7}
\HH^1(\Omega_f^\vee(-\infty))^G \to \HH^1(\Omega_f^\vee \otimes \OO_{D}(-\infty))^G \to \HH^1(\oplus_{j=1}^l \Omega_f^\vee \otimes \OO_{R_j})^G
\end{equation}
is surjective (note that $\Omega_f^\vee \otimes \OO_{R_j}\cong \Omega_f^\vee(-\infty) \otimes \OO_{R_j}$).

Since the $G$-action on the chain of rational curves $R_j$ is permutation. There is a section of
\[
\HH^1(\oplus_{j=1}^l \Omega_f^\vee \otimes \OO_{R_j})^G
\]
which is mapped to a non-zero element in the $G$-invariant part of the torsion sheaf supported at the nodes on $R_j, j=1, \ldots, l$ if and only if there is a section of
\[
\HH^1( \Omega_f^\vee \otimes \OO_{R_j})
\]
which is mapped to a non-zero element in the torsion sheaf supported at the nodes on $R_j$ and for some (and hence for all) $j$.

Since the restriction of $\Omega_f^\vee$ to $R_j$ is quasi-isomorphic to $\mcN_f\otimes \OO_{R_j}[-1]$ and $\mcN_f\otimes \OO_{R_j}$ is locally free and globally generated by the vanishing (\ref{1}) or (\ref{eqN}), this follows from the same argument as in \cite{HT06} (in particular, the bottom of P. 187 and P. 188).

So combining this observation with the surjectivity of the map in (\ref{7}) and Lemma \ref{lem:smoothCD}, we have proved that a general $G$-equivariant deformation with $\infty$ fixed smooths all the nodes and produces a new section which agrees with $\whts$ to order $k+1$.
\end{proof}

\section{Appendix: A conceptual proof of Theorem \ref{m}}

In this section, using the tool of pseudo-ideal sheaves, which is invented by Professor Jason Starr and Michael Roth in \cite{RothStarr09}, and observed by Professor Dan Abramovich and Professor Chenyang Xu to be equivalent to \emph{differential graded subscheme} of aplitute $(1,0)$ (\cite{df}), we are going for a conceptual proof of Theorem \ref{m}.

\begin{defn} \label{defn-pis}
Let $Y$ be an algebraic space and $f: X \rightarrow Y$ be a flat, locally finitely presented, proper algebraic stack over $Y$. For every morphism of algebraic spaces $g:Y'\rightarrow Y$, a flat
family of \emph{pseudo-ideal sheaves of $X/Y$ over $Y'$} is a pair
$(\mc{F},u)$ consisting of
\begin{enumerate}
\item[(i)]
a $Y'$-flat, locally finitely presented, quasi-coherent
$\OO_{X_{Y'}}$-module $\mc{F}$, and
\item[(ii)]
an $\OO_{X_{Y'}}$-homomorphism $u:\mc{F} \rightarrow \OO_{X_{Y'}}$
\end{enumerate}
such that the following induced morphism is zero
$$
u':\bigwedge^2 \mc{F} \rightarrow \mc{F}, \ \ f_1\wedge f_2 \mapsto
u(f_1)f_2 - u(f_2)f_1.
$$
\end{defn}

Let $D$ be an effective Cartier divisor in $X$,
considered as a closed subscheme of $X$, and assume $D$ is flat over
$Y$.  Denote by $\mc{I}_D$ the pullback
$$
\mc{I}_D :=
\mc{I}\otimes_{\OO_X} \OO_D
$$ on $\text{Hilb}_{X/Y}\times_X D$.  And denote by
$$
u_D:\mc{I}_D \rightarrow \OO_{\text{Hilb}_{X/Y}\times_X D}
$$
the restriction of $u$. Then we have

\begin{prop} \cite{RothStarr09}\label{prop-flat}
The locally finitely presented, quasi-coherent sheaf $\mc{I}_D$ is
flat over $\text{Hilb}_{X/Y}$.  Thus the pair $(\mc{I}_D,u_D)$ is a
flat family of pseudo-ideal sheaves of $D/Y$ over $\text{Hilb}_{X/Y}$.
\end{prop}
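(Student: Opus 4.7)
The plan is to reduce flatness of $\mc{I}_D$ to flatness of the full ideal sheaf $\mc{I}$, and then use the Cartier divisor hypothesis on $D$. Set $H = \text{Hilb}_{X/Y}$ and $X_H = X \times_Y H$, and let $Z \subset X_H$ be the universal subscheme with ideal sheaf $\mc{I}$. From the tautological short exact sequence
$$0 \to \mc{I} \to \OO_{X_H} \to \OO_Z \to 0,$$
combined with the $H$-flatness of $\OO_{X_H}$ (because $X \to Y$ is flat) and of $\OO_Z$ (by definition of the Hilbert scheme), the long exact sequence of $\text{Tor}^{\OO_H}$ immediately forces $\mc{I}$ to be $H$-flat.

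Next I exploit that $D$ is a Cartier divisor flat over $Y$. Locally on $X$ pick a non-zero-divisor $t \in \OO_X$ cutting out $D$. Since $D \to Y$ is flat, pulling back $0 \to \OO_X \xrightarrow{t} \OO_X \to \OO_D \to 0$ along $H \to Y$ preserves exactness, so the image of $t$ in $\OO_{X_H}$ is still a non-zero-divisor and, via $\mc{I} \hookrightarrow \OO_{X_H}$, a non-zero-divisor on $\mc{I}$. Consequently $\mc{I}_D = \mc{I} \otimes_{\OO_X} \OO_D = \mc{I}/t\mc{I}$ and sits in the short exact sequence
$$0 \to \mc{I} \xrightarrow{t} \mc{I} \to \mc{I}_D \to 0.$$
Tensoring this with the residue field $\kappa(h)$ at any $h \in H$ and using that $\mc{I}$ is $H$-flat, the long exact $\text{Tor}$ sequence yields
$$\text{Tor}_1^{\OO_H}(\mc{I}_D, \kappa(h)) \;=\; \ker\!\big(t : \mc{I}|_{X_{f(h)}} \to \mc{I}|_{X_{f(h)}}\big).$$
But $\mc{I}|_{X_{f(h)}}$ is the ideal sheaf of the fiber $Z_h \subset X_{f(h)}$, so it embeds into $\OO_{X_{f(h)}}$; and the image of $t$ in $\OO_{X_{f(h)}}$ cuts out the Cartier divisor $D_{f(h)}$, so $t$ is a non-zero-divisor there. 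The kernel therefore vanishes, and the local criterion of flatness upgrades this fiberwise vanishing to genuine $H$-flatness of $\mc{I}_D$.

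The pseudo-ideal condition on $(\mc{I}_D, u_D)$ is then automatic: since $u : \mc{I} \to \OO_{X_H}$ is literally the inclusion of an ideal sheaf, $u(f_1)f_2 - u(f_2)f_1 = f_1 f_2 - f_2 f_1 = 0$ tautologically, and this identity descends after restriction to $D_H$. The step I expect to require the most care is the passage from the fiberwise non-zero-divisor condition to global $H$-flatness, particularly because $f : X \to Y$ is allowed to be a proper algebraic stack and one must work smooth-locally on $H$ while tracking that the Tor computation commutes with these localizations. The Cartier hypothesis is essential precisely at this point: without it, $t$ could have zeros somewhere in $X_H$, giving nonvanishing $\text{Tor}_1^{\OO_H}(\mc{I}_D, \kappa(h))$ and destroying the $H$-flatness of $\mc{I}_D$.
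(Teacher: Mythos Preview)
The paper does not actually supply a proof of this proposition: it is quoted verbatim from \cite{RothStarr09} and used as a black box, so there is no ``paper's own proof'' to compare against. Your argument is a correct direct verification. The two ingredients you isolate --- $H$-flatness of the universal ideal $\mc{I}$ from the tautological sequence, and the fact that the local equation $t$ of $D$ stays a non-zero-divisor after every base change because $D$ is $Y$-flat and Cartier --- are exactly what is needed.

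One minor streamlining: rather than passing through residue fields and invoking the local criterion of flatness, you can run the same Tor computation for an arbitrary $\OO_H$-module $M$. From $0 \to \mc{I} \xrightarrow{t} \mc{I} \to \mc{I}_D \to 0$ and $H$-flatness of $\mc{I}$ you get
\[
\text{Tor}_1^{\OO_H}(\mc{I}_D, M) \;\cong\; \ker\bigl(t : \mc{I}\otimes_{\OO_H} M \to \mc{I}\otimes_{\OO_H} M\bigr).
\]
Since $\OO_Z$ is $H$-flat, tensoring $0\to\mc{I}\to\OO_{X_H}\to\OO_Z\to 0$ with $M$ keeps the left arrow injective, so $\mc{I}\otimes_{\OO_H} M \hookrightarrow \OO_{X_H}\otimes_{\OO_H} M$; and $t$ is a non-zero-divisor on the latter because $\OO_{D_H}$ is $H$-flat. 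This gives $\text{Tor}_1^{\OO_H}(\mc{I}_D, M)=0$ for all $M$ directly, sidestepping any Noetherian hypotheses that the local criterion would otherwise require. Your caveat about working smooth-locally on the stack is well placed but harmless, since flatness is smooth-local on both source and target.
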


Denote by
$$\xymatrixcolsep{2pc}\xymatrix{
\iota_D:\mathcal{H}ilb_{X/Y} \ar[r]& {\mcP}seudo_{D/Y}}$$

the $1$-morphism associated to the flat family $(\mc{I}_D,u_D)$
of pseudo-ideal sheaves of $D/Y$ over $\text{Hilb}_{X/Y}$.
This is the \emph{divisor restriction map}.

We have the following theorem, due to M. Roth and J. Starr.

\begin{thm}{\cite{RothStarr09}} \label{thm-jason}
Let $X$ be a Deligne-Mumford stack over $\CC$ and let $C_\kappa \subset X$ be a regularly immersed proper substack of $X$. If both
\[
H^1(C_\kappa,\OO_X(-D)\cdot
\textit{Hom}_{\OO_{C_\kappa}}(\mc{I}_\kappa/\mc{I}_\kappa^2,\OO_{C_\kappa}))
\]
 and
\[
H^1(C_\kappa, \textit{Tor}_{\OO_X}(\OO_{C_\kappa}, \OO_D)\cdot
\textit{Hom}_{\OO_{C_\kappa}}(\mc{I}_\kappa/\mc{I}_\kappa^2,\OO_{C_\kappa}))
\]
equal $0$, then the divisor restriction map $\iota_D$ is smooth at $[C_\kappa]$.
\end{thm}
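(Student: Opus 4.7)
I would check the infinitesimal lifting criterion directly. Fix a square-zero extension $0\to I\to A'\to A\to 0$ of Artinian local $\kappa$-algebras, and suppose one is handed a commuting square
\[
\begin{CD}
\SP A @>>> \text{Hilb}_{X/Y}\\
@VVV @VV{\iota_D}V\\
\SP A' @>>> \pis{D/Y}
\end{CD}
\]
The task is to produce a dotted arrow $\SP A'\to \text{Hilb}_{X/Y}$ making the square commute. The upper arrow corresponds to a flat family of closed subschemes $C_A\subset X_A$ restricting to $C_\kappa$, and the lower arrow corresponds to a flat family of pseudo-ideal sheaves $(\mcF_{A'},u_{A'})$ on $D_{A'}$ whose reduction modulo $I$ equals the divisor restriction $(\mcI_{C_A}\otimes_{\OO_{X_A}}\OO_{D_A},u_D)$.

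The standard Hilbert-scheme deformation theory places the obstruction to extending $C_A$ to a flat family $C_{A'}\subset X_{A'}$ in $H^1(C_\kappa,\mcN)\otimes_\kappa I$, where $\mcN:=\textit{Hom}_{\OO_{C_\kappa}}(\mcI_\kappa/\mcI_\kappa^2,\OO_{C_\kappa})$, and, when this obstruction vanishes, the set of lifts is a torsor under $H^0(C_\kappa,\mcN)\otimes I$. One sets up a parallel obstruction/torsor picture for flat families of pseudo-ideal sheaves on $D$ so that $\iota_D$ carries the Hilbert obstruction to the pseudo-ideal obstruction and the Hilbert torsor to the pseudo-ideal torsor; establishing this functoriality cleanly is the main technical obstacle. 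Granting it, the existence of the given lift $(\mcF_{A'},u_{A'})$ kills the pseudo-ideal obstruction, so the remaining problem is to show that restriction to $D$ is \emph{injective} on the relevant $H^1$ and \emph{surjective} on the relevant $H^0$. The first lets one find some Hilbert lift $C_{A'}$, and the second lets one adjust that lift by a section of $\mcN$ to hit $(\mcF_{A'},u_{A'})$ on the nose.

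Both the injectivity and the surjectivity reduce to a single cohomology computation. Tensor the exact sequence $0\to\OO_X(-D)\to\OO_X\to\OO_D\to 0$ over $\OO_X$ first with $\OO_{C_\kappa}$ and then with $\mcN$, and split the resulting four-term exact sequence into
\[
0\to \textit{Tor}_{\OO_X}(\OO_{C_\kappa},\OO_D)\cdot\mcN \to \OO_X(-D)\cdot\mcN\to \mcK\cdot\mcN\to 0,
\]
\[
0\to \mcK\cdot\mcN\to \mcN\to \mcN|_{C_\kappa\cap D}\to 0,
\]
where $\mcK$ denotes the image of the natural map $\OO_X(-D)\cdot\OO_{C_\kappa}\to\OO_{C_\kappa}$. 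The two hypothesized vanishings, combined with the vanishing of $H^{\geq 2}$ for sheaves on the one-dimensional scheme $C_\kappa$, force $H^1(C_\kappa,\mcK\cdot\mcN)=0$ via the long exact sequence of the first sequence; feeding this into the long exact sequence of the second sequence yields both the required injectivity of $H^1(C_\kappa,\mcN)\to H^1(C_\kappa\cap D,\mcN|_{C_\kappa\cap D})$ and the required surjectivity of $H^0(C_\kappa,\mcN)\to H^0(C_\kappa\cap D,\mcN|_{C_\kappa\cap D})$, completing the lift and hence proving smoothness of $\iota_D$ at $[C_\kappa]$.
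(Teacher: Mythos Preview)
The paper does not supply its own proof of this theorem; it is quoted verbatim from the Roth--Starr manuscript \cite{RothStarr09} and used as a black box in the proof of Theorem~\ref{thm-wa}. There is therefore nothing in the present paper to compare your argument against.

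That said, your outline has the right shape. The infinitesimal lifting criterion together with a comparison of obstruction theories is exactly how one expects such a statement to be proved, and the cohomological reduction via the short exact sequence $0\to\OO_X(-D)\to\OO_X\to\OO_D\to 0$ restricted to $C_\kappa$ is the natural mechanism for converting the two hypothesized vanishings into the required surjectivity on $H^0$ and injectivity on $H^1$. You correctly flag the ``main technical obstacle'': one must build a deformation--obstruction theory for flat families of pseudo-ideal sheaves on $D$ and verify that $\iota_D$ carries the Hilbert obstruction class to the pseudo-ideal obstruction class functorially. This is precisely the content of \cite{RothStarr09}, and it is not something one can improvise in a paragraph; in particular, identifying the tangent/obstruction spaces for $\pis{D/Y}$ with $H^i(C_\kappa\cap D,\mcN|_{C_\kappa\cap D})$ requires justification, since the pseudo-ideal sheaf $\mcI_D$ lives on all of $D$, not just on $C_\kappa\cap D$. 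A smaller point: after tensoring the four-term sequence with $\mcN$ over $\OO_{C_\kappa}$, exactness on the left of your first short exact sequence is not automatic---you need either a $\textit{Tor}_1^{\OO_{C_\kappa}}$ vanishing or a more careful reading of the ``$\cdot$'' notation (image of multiplication rather than tensor product) to make the diagram chase go through.
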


\begin{rem}When $C_\kappa$ is locally complete intersection, and as if one looks at the derived intersection product $$\xymatrixcolsep{2pc}\xymatrix{ C_\kappa \ar@{|->}[r] & C_\kappa \times_{X}^{L} D}$$ which has the well-behaved tangent obstruction $\mcR \Gamma(C,\mcN_{C/\mcX}\otimes \OO_{\mcX_b})$ (\cite{L04}, \cite{L12}). Then assuming Proposition \ref{prop-flat}, Theorem \ref{thm-jason} will follow directly from the statement that $\iota_D$ is smooth at $[C_\kappa]$ if $\mcR \Gamma(C,\mcN_{C/\mcX}\otimes \OO_{\mcX_b})$ has vanishing first term-this is a standard result due to the theory of tangent complex. To see this, we have the short exact sequence $$0\to \textit{Tor}_{\OO_X}(\OO_{C_\kappa}, \OO_D)\cdot
\textit{Hom}_{\OO_{C_\kappa}}(\mc{I}_\kappa/\mc{I}_\kappa^2,\OO_{C_\kappa}) \to \OO_X(-D)\otimes
\textit{Hom}_{\OO_{C_\kappa}}(\mc{I}_\kappa/\mc{I}_\kappa^2,\OO_{C_\kappa})$$ $$ \to \OO_X(-D)\cdot
\textit{Hom}_{\OO_{C_\kappa}}(\mc{I}_\kappa/\mc{I}_\kappa^2,\OO_{C_\kappa})\to 0 $$
Taking the long exact sequence, then vanishing of the two first cohomology groups in Theorem \ref{thm-jason} will imply that $$H^1(C_\kappa,\OO_X(-D)\otimes
\textit{Hom}_{\OO_{C_\kappa}}(\mc{I}_\kappa/\mc{I}_\kappa^2,\OO_{C_\kappa}))=0$$ and the rest is to remember that the first term of $\mcR \Gamma(C,\mcN_{C/\mcX}\otimes \OO_{\mcX_b})$ is exactly $H^1(C_\kappa,\OO_X(-D)\otimes
\textit{Hom}_{\OO_{C_\kappa}}(\mc{I}_\kappa/\mc{I}_\kappa^2,\OO_{C_\kappa}))$.
\end{rem}

Here is the idea of the proof of Theorem \ref{m} using pseudo-ideal sheaves. We first show that we can deform the restriction of a section $s$ to a formal neighborhood of the singular fiber deforms to another formal section which agrees with the given formal section $\wht{s}$ to any pre-specified order. In particular, this deformation gives a deformation of the corresponding pseudo-ideal sheaves. Then we use Theorem \ref{thm-jason} to show that this deformation of pseudo-ideal sheaves lifts to a deformation of global sections. In order to apply Theorem \ref{thm-jason}, we need to control the cohomology of the curve in the central fiber. To do this, we replace the formal neighborhood of a singular fiber by a smooth Deligne-Mumford stack. This is possible by Proposition \ref{lem:basechange}.

\subsection{A remark on $R$-equivalence}

We firstly recall
\begin{defn}
Let $X$ be a variety over an arbitrary field $k$. Let $x$, $y$ be $2$ points in $X(k)$, they are $R$-equivalent if there is a $k$-morphism $f: \mathbb{P}^1 \to X$ such that $f(0)=x$, $f(\infty)=y$.
\end{defn}

\begin{prop}\label{prop:Requiv}
Let $\mcX \to \SP\CC\Semr{t}$ be an family of rationally connected varieties satisfying Hypothesis \ref{hyp}, esp.~isotrivial family (Proposition \ref{lem:basechange}). Then the $R$-equivalence class of rational points consists of a unique class.
\end{prop}

Note that it is not known in general that the $R$-equivalence class of rational points on a rationally connected variety defined over the Laurent field $\CC\Semr{t}$ is finite.

\begin{proof}
By the assumption of Hypothesis \ref{hyp}, there is a Galois extension of the field $K\subset \wht{K}=K\Semr{t'}/({t'}^l-t)$ with Galois group $G\cong \ZZ/l\ZZ$ such that $\mcX_{\wht{K}}=\mcX_K\otimes_K \SP \wht{K}\cong \mcX'$ where $\mcX'$ is a smooth family with a $G$ action such that all compatibilities are satisfied. In other words, we have the following Cartesian diagram
$$
\xymatrixcolsep{3pc}\xymatrix{
\mcX_{\wht{K}} \ar[r] \ar[d]&\mcX_K \ar[d]\\
\SP\wht{K} \ar[r] &\SP K
}
$$
Let $\whtOO=\CC\Sem{t'}$. The group $G$ acts on $\SP \whtOO$ and the smooth family $\mcX'$ extends to a smooth family $\mcX' \times \SP \whtOO$ which we denote $\mcX'$ again, in such a way that the projection onto the second factor is $G$-equivariant. In particular, $G$ acts on the central fiber $\mcX'_0$ naturally.

Let $s_1, s_2$ be two rational points of $\mcX$. They induce two $\wht{K}$- rational points $\wht{s}_1, \wht{s}_2$ of $\mcX_K\otimes_K \SP \wht{K}$, invariant under the action of the Galois group $G$.

By the valuative criterion of properness, we have two $\whtOO$-points of $\mcX'$, still denoted by $\wht{s}_1, \wht{s}_2$. Let $\wht{s}_1^0, \wht{s}_2^0$ be the intersection points of these two section with the central fiber. Since $\wht{s}_1, \wht{s}_2$ are invariant under the Galois group, the two points $\wht{s}_1^0, \wht{s}_2^0$ are fixed points of the $G$-action on $\mcX'_0$.

By Theorem \ref{thm:Hypersurface} and Lemma \ref{lem:EquivSmoothing}, there exists a $G$-equivariant very free curve $f: (\p,0, \infty) \rightarrow (\mcX'_0, \wht{s}_1^0, \wht{s}_2^0)$. Since the morphism is very free, we have $H^1(\PP^1, f^*T_{\mcX'_0}(-0-\infty))=0$. Thus the map
\[
p: Hom(\PP^1 \times \SP \whtOO, \mcX', f|_{0\times\SP\whtOO}=\wht{s}_1, f|_{\infty \times\SP\whtOO}=\wht{s}_2)) \rightarrow \SP\whtOO
\]
 is smooth at $[f]$. Notice that both spaces have a natural $G$ action such that this map is equivariant and $[f]$ is a fixed point of the action. So by Corollary \ref{sm-equiv-lifting}, there is a $G$-equivariant section
\[
\sigma:\SP\whtOO \rightarrow Hom(\p\times \SP \whtOO, \mcX').
\]
i.e., we have a $G$-equivariant map
\[
f:\p\times \SP\whtOO \rightarrow \mcX'
\]
 such that
\[
f\mid_{ 0\times\SP\whtOO}=\wht{s}_1, f\mid_{\infty\times\SP\whtOO}=\wht{s}_2.
\]
Restricting to the generic fiber gives a Galois invariant $\PP^1$ connecting the two rational points $\wht{s}_1, \wht{s}_2$. Thus the two rational points ${s}_1,{s}_2$ are connected by a rational curve defined over $\CC\Semr{t}$.
\end{proof}

\subsection{Framework of the proof}
It is proved in \cite{HT06} that weak approximation is satisfied if the points are chosen such that the fiber over them are smooth. Their proof is a deformation argument. And the proof given below is also a deformation argument. So using the iterated blow-up construction, it suffices to prove weak approximation at a single place of bad reduction $b \in B$.

Let $\wht{s}$ be a formal section in the formal neighborhood of $b$ we want to approximate. There is a regular section $s$, which we assume to lie in the smooth locus of $\mcX$ by taking a resolution of singularities.

As indicated above, we will work with the \emph{smooth} Deligne-Mumford stack obtained by replacing the formal neighborhood of the singular fiber over $b$ with the smooth Deligne-Mumford stack based on Hypothesis \ref{hyp}.

To be more precise, recall that there is a Galois extension of the field $K\subset \wht{K}=K\Semr{t'}/({t'}^l-t)$ with Galois group $G\cong \ZZ/l\ZZ$ such that $\mcX_{\wht{K}}=\mcX_K\otimes_K \SP \hk\cong \mcX'$. And we have the following Cartesian diagram
$$
\xymatrixcolsep{3pc}\xymatrix{
\mcX_{\wht{K}} \ar[r]\ar[d]& \mcX_K \ar[d]\\
\SP\wht{K} \ar[r] &\SP K
}
$$

together a natural action of $G$ on $\mcX'$ and $\SP \wht{K}$.  Let $\whtOO=\CC\Sem{t'}$. The group $G$ acts on $\SP \whtOO$ and the smooth family $\mcX'$ extends smoothly over $\SP \whtOO$ in such a way that the projection onto the second factor is $G$-equivariant. So $G$ acts on the central fiber $\mcX'_0$.

Denote by $\mathfrak{X}' \to B'$ again the new stacky family obtained by replacing the formal neighborhood of $b$ of the original family $\mcX \to B$ with the quotient stack $[\mcX'/G]$.

 The section $s$ and the formal section $\wht{s}$ give two $\whtOO$-points of $\mcX'$, invariant under the action of the Galois group $G$. We still denote the corresponding invariant section and formal section by $s$ and $\wht{s}$. Furthermore, they induce a section $s'$ of the new family $\mcX' \to B'$ and a formal section $\wht{s}'$ of $[\mcX'/G] \to [\SP \whtOO/G]$.

The weak approximation problem for the two families are equivalent.

The proof of Proposition \ref{prop:Requiv} shows that we have a $G$-equivariant map
\[
f:\p\times \SP\whtOO \rightarrow \mcX'
\]
 such that
\[
f\mid_{ 0\times\SP\whtOO}=s|_{\SP \whtOO}, f\mid_{\infty\times\SP\whtOO}=\wht{s}
\]
We may assume that the morphism is a closed immersion up to replacing the family with product and taking the graph of $f$.

This gives a closed immersion
\[
i:[\PP^1 \times \SP\whtOO/G] \rightarrow [\mcX'/G].
\]

Let $D$ be the divisor in $[\mcX'/G]$ over the closed point of $[\SP \whtOO/G]$ and $E$ its restriction to this ruled surface.

Let $N$ be the order to which we want the regular section to agree with the given formal section. Denote the stacky curve in $D$ by $C_0$. Let $C_s$ be the union of  the subscheme consisting of curve $N \cdot C_0$ and the section $s'(B')$. Let $C_\kappa$ be a comb obtained by attaching very free rational curves in general fibers along general normal directions to $C_s$. Notice that the pseudo-ideal sheaf obtained by restricting $C_\kappa$ to $N\cdot D$ is the same as that obtained by restricting $C_s$, .

 Now we need to use Proposition \ref{prop:Requiv} to produce a pencil of pseudo-ideal sheaves.
\subsection{A non-separated pencil in $\mcP seudo_{\mcX_b}$}
\begin{lem}\label{lem:ps}
There is a family of pseudo-ideal sheaves $\mc{I}_t, t \in \PP^1$ in $N \cdot D$ such that $\mc{I}_0$ is isomorphic to the pseudo-ideal of the restriction of $C_\kappa$ to $N\cdot D$, and a general member of the family is isomorphic to the restriction of $\wht{s}'$ to $N \cdot D$.
\end{lem}
\begin{proof}
It is easy to see that there is a ruled surface
\[
\pi: P=[\PP^1 \times C /G]\rightarrow [C /G]
\]
for some curve $C$ which has a $G$ action (for example, take $C$ to be a cyclic cover of $B$ totally ramified over the point we want to approximate), together with two divisors $0_P$ and $\infty_P$,
 such that
\begin{enumerate}
\item the base change of this ruled surface to $\SP{\whtOO_{0, [C/G]}}$ is isomorphic to
\[
[\PP^1 \times \SP \whtOO /G]\rightarrow [\SP \whtOO /G];
\]
\item the restriction of $0_P$ (resp. $\infty_P$) to ${\SP{\whtOO}}$ is congruent to $C_\kappa|_{\SP{\whtOO}}$ (resp. $\wht{s}'$) modulo $(N+1) \cdot E$.
\end{enumerate}
On $P$ consider the invertible sheaf $\mc{L} = \OO_P(\infty_P - 0_P - \pi^*(N \cdot E))$, where $N$ is the order to which we want the regular section to agree with the given formal section. Since the restriction to the generic fiber of $P\rightarrow [C/G]$ is a degree $0$ invertible sheaf on $[C/G]$, it has a $1$-dimensional space of global sections.  Thus the push-forward of $\mc{L}$ to $[C/G]$ is a torsion-free, coherent $\OO_{[C/G]}$-module of rank $1$, i.e., it is an invertible sheaf.  Thus there exists an effective divisor $\Delta$ in $C$, not intersecting $E$ such that $\OO_P(\infty_P - 0_P +\pi^* \Delta - \pi^* (N \cdot E))$ has a global section. In other words, there is an effective divisor $F$ in $P$, necessarily vertical, such that
\[
\infty_P + \pi^* \Delta = 0_P + \pi^*(N \cdot E) + F.
\]

Let the curves $G_t$ be the members of the pencil spanned by $0_P+ \pi^*(N \cdot E) + F$ and $\infty_P + \pi^*\Delta$. All but finitely many members of this pencil are comb-like curves with handle a section of $P\rightarrow [C/G]$.  Since the base locus of the pencil contains $\infty_P\cap \pi^*(N \cdot E)$, these section curves agree with $\infty_P$ over $(N \cdot E)$. Restricting to $(N \cdot E)$ we get a one parameter family of pseudo-ideal sheaves in $N\cdot E$, hence also in $N \cdot D$ such that they agree with the given formal section $\wht{s}$ to a given order.

\end{proof}

With this family of pseudo-ideal sheaves in hand, all we need to do is to show that the obstruction groups of lifting the deformation to the Hilbert scheme vanish.

\subsection{Vanishing of the tangent obstruction}

 The sheaf
\[
\OO_{\mathfrak{X}'}(-N\cdot D)\cdot
\textit{Hom}_{\OO_{C_\kappa}}(\mc{I}_\kappa/\mc{I}_\kappa^2,\OO_{C_\kappa})
\]
is supported in the union of the section $s'(B')$ and the very free curves since $C_0$ is contained in $D$. If we attach enough very free curves, we can make
\[
H^1(C_\kappa,\OO_{\mathfrak{X}'}(-N \cdot D)\cdot
\textit{Hom}_{\OO_{C_\kappa}}(\mc{I}_\kappa/\mc{I}_\kappa^2,\OO_{C_\kappa}))
\]
zero (by Remark \ref{coherent sheaf}, applying the \emph{relative smoothing} Theorem \ref{relative smoothing}).

Now we want to show that
\[
H^1(C_\kappa, \textit{Tor}_{\OO_X}(\OO_{C_\kappa}, \OO_D)\cdot
\textit{Hom}_{\OO_{C_\kappa}}(\mc{I}_\kappa/\mc{I}_\kappa^2,\OO_{C_\kappa}))
\]
is also $0$. First, using the exact sequence
\[
0 \rightarrow \OO(-N\cdot D) \rightarrow \OO_{\mcX'} \rightarrow \OO_{N\cdot D} \rightarrow 0
\]
we see that
\[
\textit{Tor}_{\OO_{\mathfrak{X}'}}(\OO_{C_\kappa}, \OO_{N\cdot D})\cong \OO_{N\cdot C_0}(-p),
\]
where $p$ is the intersection of section and the stacky curve $C_0$ in $D$.  Thus the sheaf is supported on $C_0$. We have the short exact sequence of sheaves
\[
0 \rightarrow \mcN_{N \cdot C_0/\mathfrak{X}'}(-p) \rightarrow \mcN_{C_\kappa/\mathfrak{X}'}\rightarrow Q\rightarrow 0,
\]
where $Q$ is a torsion sheaf supported at $p$. Thus it suffices to show that
$$H^1(C_0, \mcN_{C_0/\mathfrak{X}'}(-p))=H^1(C_0, \mcN_{C_0/[\mcX'/G]}(-p))$$ is $0$. Let $C_0'$ and $p'$ be the preimages of $C_0$ and $p$ in $\mcX'$ and $q: C_0' \rightarrow C_0$ be the restriction of the quotient map $\mcX' \rightarrow [\mcX'/G]$. Then the sheaf
\[
\mcN_{C_0/[\mcX' /G]}(-p)
\]
is a direct summand of
\[
q_*(q^*\mcN_{C_0/[\mcX'/G]}(-p))=q_*(\mcN_{C_0'/\mcX'}(-p'))
\]
Therefore, the cohomology groups is also a direct summand of the corresponding cohomology. But we have chosen the curve $C_0'$ to be a very free curve in the fiber in the proof of Proposition \ref{prop:Requiv}. Thus we have $H^1(C_0' , N_{C_0'/\mcX'}(-p'))=0$ and so is
\[
H^1(C_\kappa, \textit{Tor}_{\OO_{\mathfrak{X}'}}(\OO_{C_\kappa}, \OO_D)\cdot
\textit{Hom}_{\OO_{C_\kappa}}(\mc{I}_\kappa/\mc{I}_\kappa^2,\OO_{C_\kappa})).
\]

 Therefore the divisor restriction map $\iota_{N\cdot D}:\text{Hilb}_{\mathfrak{X}'} \rightarrow \pis{N \cdot D}$ is smooth at $[C_\kappa]$. And we get a one parameter family of section curves whose restriction to the divisor $N\cdot D$ contains general members of the family of pseudo-ideal sheaves $\mc{I}_t$ in Lemma \ref{lem:ps} (c.f. Remark \ref{subtlety}).  In particular, we get a section of the actual family which agrees with the formal section to the given order $N$.

\begin{rem}\label{subtlety}
 As we noted in the beginning of this section, the space of pseudo-ideals is highly non-separated. Thus even if we know the divisor restriction map is dominant and there is a family of pseudo-ideal sheaves deforming from one to another, all we can conclude is that we have a family of sections whose restriction to the divisor is isomorphic to the family of pseudo-ideal sheaves at general points of the family. Therefore in our situations, it is essential to have a family of pseudo-ideal sheaves whose general members are all isomorphic to the pseudo-ideal sheaf coming from the formal section (e.g. the family $G_t$ coming from a pencil).
\end{rem}

\end{document}